\newtheorem{remark}{Remark}[section]
\newtheorem{corollary}{Corollary}[section]
\newtheorem{lemma}{Lemma}[section]
\newtheorem{proposition}{Proposition}[section]
\newtheorem{theorem}{Theorem}[section]
\def\Var{\textup{Var}}
\def\Cov{\textup{Cov}}
\def\R{\mathbb{R}}
\def\E{\mathbb{E}}
\def\P{\mathbb{P}}
\title{Transition in the ancestral reproduction rate and its implications for the site frequency spectrum}
\author{Yubo Shuai\\ University of California San Diego}
\begin{document}

\maketitle

\footnote{{\it AMS 2020 subject classifications}.  Primary 60J80; Secondary 60J90, 92D15, 92D25}
\footnote{{\it Key words and phrases}.  Birth and death process, Ancestral lineage, Site frequency spectrum}

\begin{abstract}
Consider a supercritical birth and death process where the children acquire mutations. We study the mutation rates along the ancestral lineages in a sample of size $n$ from the population at time $T$. The mutation rate is time-inhomogenous and has a natural probabilistic interpretation. We use these results to obtain asymptotic results for the site frequency spectrum associated with the sample.
\end{abstract}

\section{Introduction}
Consider a growing population with mutations introduced at the times of birth events. While it is a common assumption in population genetics that mutations are introduced in a time-homogeneous manner \cite{durrett2013population}, the mutations observed in the sample have a time-inhomogenous behavior. This comes from the fact that birth events occurring at different times have different chances of being observed in the sample. In this paper, we analyze the mutations observed in the sample and apply this result to the site frequency spectrum.  

\subsection{Ancestral reproduction rate}
Cheek and Johnston \cite{cheek2023ancestral} considered a continuous-time Galton-Watson process where each individual reproduces at rate $r$. In a reproduction event, an individual is replaced by $k\ge 0$ individuals with probability $p_k$. Denote by $N_t$ the population size at time $t$ and $F_t(s)=\E[s^{N_t}]$ for $s\in[0,1]$ its generating function. Conditional on survival up to time $T$, i.e., $N_T>0$, an individual is sampled uniformly at random from the population at time $T$. They studied the rate of reproduction events along the ancestral lineage of the sampled individual. The rate is biased towards large reproduction events and is inhomogeneous along the ancestral lineage. Specifically, they show the following theorem.
\begin{theorem}[Theorem 2.2 of \cite{cheek2023ancestral}]
There exists a random variable $S$ with density $F'_T(s)/(1-F_T(0))$ on [0,1] such that conditional on $S=s$, independently for each $l$, size $l$ reproduction events occur along the uniform ancestral lineage according to a time inhomogeneous Poisson point process with intensity function
$$
r_l(s,t) = rlp_l F_{T-t}(s)^{l-1}.
$$
\end{theorem}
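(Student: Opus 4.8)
The plan is to reduce the uniform sampling to a spine (ancestral-lineage) computation and to handle the size bias through a coloring argument. The starting point is the elementary identity $1/N = \int_0^1 s^{N-1}\,ds$, valid for every integer $N\ge 1$. Writing the uniform pick among the $N_T$ individuals alive at time $T$ as a $1/N_T$ weight and inserting this identity, I would express, for any event $A$ concerning the reproduction point process along the sampled lineage,
\begin{equation*}
\P(\text{lineage}\in A \mid N_T>0) = \frac{1}{1-F_T(0)}\int_0^1 \E\Big[\sum_{v}\mathbf{1}\{\mathrm{spine}(v)\in A\}\,s^{N_T-1}\Big]\,ds,
\end{equation*}
where the sum runs over individuals $v$ alive at $T$ and $\mathrm{spine}(v)$ is the reproduction history on the path from the root to $v$. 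This already forces the law of $S$: taking $A$ to be the whole space gives $\E[N_T s^{N_T-1}]=F_T'(s)$, so $S$ must have density $F_T'(s)/(1-F_T(0))$.

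The second idea is to read $s^{N_T-1}$ probabilistically. Marking each individual alive at time $T$ independently with probability $s$, the weight $s^{N_T-1}$ is the probability that every individual \emph{other than} $v$ is marked. Hence the inner expectation counts leaves $v$ all of whose off-lineage subtrees are entirely marked, and a subtree of age $\tau$ is fully marked with probability $\E[s^{N_\tau}]=F_\tau(s)$. This is exactly where the factor $F_{T-t}(s)^{l-1}$ will originate: a size-$l$ event on the lineage at time $t$ leaves $l-1$ off-spine subtrees of age $T-t$, each contributing $F_{T-t}(s)$.

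To make this rigorous I would set $H(T,A):=\E[\sum_v\mathbf{1}\{\mathrm{spine}(v)\in A\}\,s^{N_T-1}]$ and derive a renewal-type recursion by conditioning on the first reproduction event of the root. If this event occurs at time $t$ and produces $l$ offspring, exchangeability of the $l$ resulting subtrees yields a factor $l$ (the choice of which subtree carries $v$) together with $F_{T-t}(s)^{l-1}$ from the remaining subtrees, giving
\begin{equation*}
H(T,A)=e^{-rT}\mathbf{1}\{\emptyset\in A\}+\int_0^T r e^{-rt}\sum_{l}p_l\,l\,F_{T-t}(s)^{l-1}\,H\big(T-t,\,A\ominus(t,l)\big)\,dt,
\end{equation*}
where $A\ominus(t,l)$ denotes $A$ after prepending a size-$l$ event at time $t$. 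The claim is then $H(T,A)=F_T'(s)\,\P_s(A)$, where $\P_s$ is the law of independent inhomogeneous Poisson processes with the stated intensities $r_l(s,t)=r\,l\,p_l\,F_{T-t}(s)^{l-1}$. I would verify this by showing $F_T'(s)\P_s(A)$ solves the same recursion: conditioning the Poisson process on its first point produces an identical equation, and the normalizing factors match after using $\partial_t F_t'(s)=r(\phi'(F_t(s))-1)F_t'(s)$ (with $\phi(x)=\sum_k p_k x^k$), which integrates to $F_T'(s)=\exp(\int_0^T r(\phi'(F_t(s))-1)\,dt)$; this is precisely the identity relating $e^{-rt}$ to the Poisson survival factor $\exp(-\int_0^t r\phi'(F_{T-u}(s))\,du)$. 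Uniqueness for the recursion then gives the identity, and integrating against $ds$ completes the proof.

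I expect the main obstacle to be the bookkeeping in the spine recursion: cleanly separating the on-lineage offspring from the $l-1$ off-lineage ones, tracking that the off-lineage subtrees must be \emph{fully} marked (so they contribute $F_{T-t}(s)$ rather than $1$), and matching the deterministic prefactors so that the mixed-Poisson law emerges with the correct normalization. The conceptual step that makes everything work is the dual use of $s$: simultaneously as the integration variable producing $S$ and as a marking probability producing the generating-function factors.
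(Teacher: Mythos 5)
This theorem is quoted verbatim from Cheek and Johnston \cite{cheek2023ancestral}; the present paper states it as background and contains no proof of it, so your proposal can only be compared with the original argument in that reference. Judged on that basis, your proof is correct and rests on the same two devices as the source: the identity $1/N=\int_0^1 s^{N-1}\,ds$, which converts the uniform ($1/N_T$-weighted) sample into an $s$-weighted sum over lineages and simultaneously forces the mixing density $F_T'(s)/(1-F_T(0))$, and the reading of $s^{N_T-1}$ through the generating functions of the off-spine subtrees, so that a size-$l$ event at time $t$ contributes $l\,F_{T-t}(s)^{l-1}$. Your verification step is also sound: the ratio identity $F_T'(s)\exp\bigl(-\int_0^t r\phi'(F_{T-u}(s))\,du\bigr)/F_{T-t}'(s)=e^{-rt}$, which follows from $\partial_t F_t'(s)=r(\phi'(F_t(s))-1)F_t'(s)$, is exactly what matches the branching recursion for $H(T,A)$ with the first-point decomposition of the mixed Poisson law, including the empty-configuration term $e^{-rT}\mathbf{1}\{\emptyset\in A\}$. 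Where you package this as a Volterra-type renewal equation plus uniqueness, the original proof reaches the same factorization by computing the joint (infinitesimal) densities of the lineage events directly; the difference is cosmetic, though your route does oblige you to actually prove uniqueness — a Gronwall bound works, using that for $s<1$ one has $H(T',\cdot)\le F_{T'}'(s)<\infty$ and that $\phi'(F_u(s))$ is bounded for $u$ in compacts — and to invoke Fubini (harmless, as all integrands are nonnegative). With those routine points filled in, the proposal is a complete and faithful proof of the stated theorem.
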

Igelbrink and Ischebeck \cite{igelbrink2023ancestral} generalized this result to the case when the reproduction rates are age-dependent. 

In general, one can also take a sample of size $n$ from the population. When we trace back the ancestral lineages in the sample, they will coalesce when individuals find their common ancestors, giving us a \emph{coalescent tree}. A central question in population genetics is to understand this coalescent tree and the distribution of the mutations along the tree. 

In this paper, we consider sampling $n$ individuals uniformly at random from a supercritical birth and death process with birth rate $\lambda$ and death rate $\mu$, where $\lambda>\mu$. We write $r=\lambda-\mu> 0$ for the net growth rate. When there is a birth event, we assume the number of mutations that the child acquires has a Poisson distribution with mean $\nu$. The process starts with one individual and runs for $T$ units of time. As usual, we write $N_t$ for the population size at time $t$ and $F_t(s)=\E[s^{N_t}]$ for $s\in[0,1]$ for its generating function. Conditional on $N_T\ge n$, we take a uniform sample of size $n$. 

We give the construction of the coalescent tree and the mutations in the following proposition, with the explanation given in Section \ref{Section: mutations in a sample from a population}. It is easier to construct the tree backward in time. That is, we trace back the lineages of the sampled individuals and track the time for individuals to find their common ancestor. If there are $n$ individuals in the sample, then there are $n-1$ coalescent times $H_{i,n, T}$ for $1\le i\le n-1$. We should emphasize that the construction of the coalescent tree without mutations is obtained by Harris, Johnston, and Roberts \cite{harris2020coalescent} and Lambert \cite{lambert2018coalescent} using two different approaches. The approach described in steps 1 and 2 of construction below comes from Lambert \cite{lambert2018coalescent}.
\begin{proposition}\label{Proposition: construction}
One can generate the coalescent tree with the mutations as follows. 
\begin{enumerate}
\item Take the sampling probability $Y_{n,T}$ from the density 
\begin{equation}\label{Mutations in a sample from a population, uniform(n) sample, density for Y}
f_{Y_{n,T}}(y)=\frac{n\delta_T y^{n-1}}{(y+\delta_T-y\delta_T)^{n+1}},\qquad y\in(0,1), 
\end{equation}
where $\delta_t = r/(\lambda e^{rt}-\mu)$.
\item Conditional on $Y_{n,T}=y$, the $0$th branch length $H_{0,n,T}$ is $T$, and independently for $i=1,2,\dots, n-1$, the $i$th branch length $H_{i,n,T}$ has density
\begin{equation}\label{Introduction, density of H_i,n,T conditional on Y_n,T}
f_{H_{i,n,T}|Y_{n,T}=y}=\frac{y\lambda+(r-y\lambda)e^{-rT}}{y\lambda(1-e^{-rT})} \cdot \frac{y\lambda r^2 e^{-rt}}{(y\lambda+(r-y\lambda)e^{-rt})^2}\mathbf{1}_{\{0< t<T\}}.
\end{equation}
We draw vertical lines of lengths  $H_{i,n, T}$ for $i=0,1,2,\dots, n-1$. Then, we draw horizontal lines to the left, stopping when we hit a vertical line. See Figure \ref{Fig: generating the coalescent tree} for an illustration.

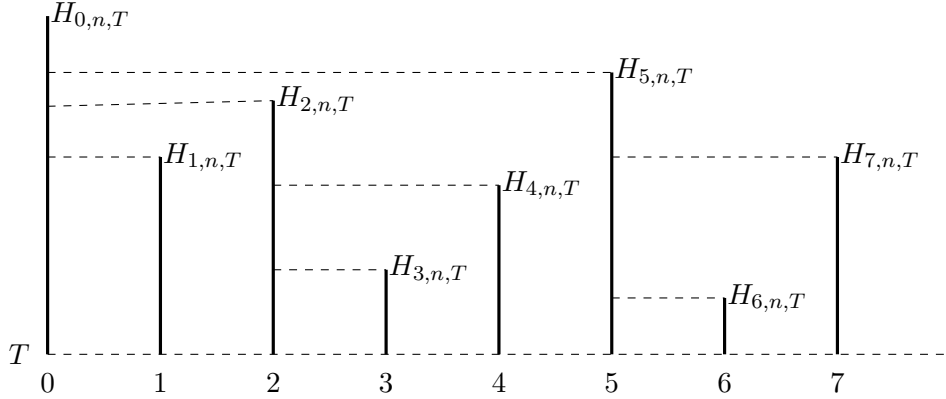
\begin{figure}[h]
\centering
\begin{tikzpicture}[scale=0.75]

\draw[dashed](0,0)--(16,0);
\node at (-0.5,0) {$T$};

\draw[very thick](0,0)--(0,6);
\draw[very thick](2,0)--(2,3.5);
\draw[very thick](4,0)--(4,4.5);
\draw[very thick](6,0)--(6,1.5);
\draw[very thick](8,0)--(8,3);
\draw[very thick](10,0)--(10,5);
\draw[very thick](12,0)--(12,1);
\draw[very thick](14,0)--(14,3.5);

\node at (0.75,6){$H_{0,n,T}$};
\node at (2.75,3.5){$H_{1,n,T}$};
\node at (4.75,4.5){$H_{2,n,T}$};
\node at (6.75,1.5){$H_{3,n,T}$};
\node at (8.75,3) {$H_{4,n,T}$};
\node at (10.75,5){$H_{5,n,T}$};
\node at (12.75,1){$H_{6,n,T}$};
\node at (14.75,3.5){$H_{7,n,T}$};

\draw[dashed] (0,3.5)--(2,3.5);
\draw[dashed] (0,4.4)--(4,4.5);
\draw[dashed] (4,1.5)--(6,1.5);
\draw[dashed] (4,3)--(8,3);
\draw[dashed] (0,5)--(10,5);
\draw[dashed] (10,1)--(12,1);
\draw[dashed] (10,3.5)--(14,3.5);

\node at (0, -0.5){$0$};
\node at (2, -0.5){$1$};
\node at (4, -0.5){$2$};
\node at (6, -0.5){$3$};
\node at (8, -0.5){$4$};
\node at (10, -0.5){$5$};
\node at (12, -0.5){$6$};
\node at (14, -0.5){$7$};
\end{tikzpicture}

\caption{A coalescent tree with sample size $n=8$ and 7 coalescent times.}
\label{Fig: generating the coalescent tree}
\end{figure}

\item Mutational events occur at the coalescent times $H_{1,n,T},\dots, H_{n-1,n,T}$ and along the branches. Conditional on $Y_{n, T}=y$, mutational events along the branches occur independently along the $i$th branch at times of an inhomogeneous Poisson process for $i=0,1,\dots,n-1$. At time $t$ time units before sampling, the rate is $\lambda F_t(1-y)$, where
$$
F_t(1-y)=1-\delta_t e^{rt}+\frac{\delta_t^2 e^{rt}(1-y)}{\delta_t+y-y\delta_t},\qquad \delta_t=\frac{r}{\lambda e^{rt}-\mu}.
$$ 
At each mutational event, the number of mutations has a Poisson distribution with mean $\nu$.
\end{enumerate}
\end{proposition}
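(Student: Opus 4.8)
The plan is to reduce everything to a \emph{Bernoulli} (binomial) sampling of the birth--death tree at time $T$, and then to read off the three ingredients of Proposition~\ref{Proposition: construction}---the law of $Y_{n,T}$, the branch lengths, and the mutations---in that order. I would treat the first two as essentially the coalescent point process of Lambert \cite{lambert2018coalescent} and of Harris, Johnston and Roberts \cite{harris2020coalescent}, and reserve the real work for the mutation rate, which is the new contribution.

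First I would replace the uniform sample of size $n$ by the following equivalent device: draw $Y_{n,T}=y$, and then retain each of the $N_T$ individuals alive at time $T$ independently with probability $y$. Since $N_T$ conditioned on $N_T\ge 1$ is geometric with a parameter expressible through $\delta_T=r/(\lambda e^{rT}-\mu)$, the number of retained individuals is a mixture of binomials, and a direct generating-function computation shows that choosing $Y_{n,T}$ with the stated density makes the retained set, conditioned to have size $n$, exactly a uniform sample of size $n$. Once we work under $\P(\,\cdot\mid Y_{n,T}=y)$ with independent $y$-retention, the genealogy of the retained individuals is the reduced tree of a supercritical birth--death process under Bernoulli sampling; the coalescent point process description encodes it by i.i.d.\ branch heights, and computing the coalescence time of two consecutive retained leaves from the explicit generating function $F_t$ yields the stated branch-length density. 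I would quote these two steps from \cite{lambert2018coalescent, harris2020coalescent} and only verify that the generating-function input matches the present parametrization.

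The heart of the argument is the mutation rate. In the full tree, mutations form a marked Poisson process: along every lineage births occur at rate $\lambda$, each birth spawns a child subtree and attaches to the child a $\mathrm{Poisson}(\nu)$ number of mutations carried by that child's entire subtree. Conditioning on $Y_{n,T}=y$ and on the full tree, the branching property makes the events $\{$the child subtree born at time $T-t$ contains a retained individual$\}$ independent across births, each of probability $1-F_t(1-y)$, where $F_t(1-y)=\E[(1-y)^{N_t}]$ has the stated closed form. Thinning the mutation process to the \emph{observed} mutations (those whose child subtree carries at least one retained leaf) and pushing the result onto the reduced tree is then bookkeeping: a mutation attached to a child born at time $T-t$ sits on the reduced branch whose leaf set is the set of retained descendants of that child. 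The key dichotomy is that such a mutation is an ordinary branch mutation exactly when the parent's \emph{continuation}---the age-$t$ subtree generated after this birth---carries no retained leaf, an event of probability $F_t(1-y)$ that is independent of the child's subtree; otherwise the birth is a coalescence and the mutation sits at a coalescent time. Since on average $N_{T-t}(1-F_t(1-y))$ branches are present at height $t$, dividing the aggregate within-branch intensity $\lambda N_{T-t}(1-F_t(1-y))F_t(1-y)$ by this count gives the per-branch rate $\lambda F_t(1-y)$, while the complementary intensity $\lambda N_{T-t}(1-F_t(1-y))^2$ accounts for the mutational events placed at $H_{1,n,T},\dots,H_{n-1,n,T}$.

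The main obstacle is to upgrade this intensity heuristic to an exact statement: that conditionally on $Y_{n,T}=y$ and the branch lengths, the mutations along distinct branches are \emph{independent} inhomogeneous Poisson processes of rate $\lambda F_t(1-y)$, together with the prescribed atoms at the coalescent times. The difficulty is that conditioning on the reduced-tree shape is a conditioning on the retention pattern, which can a priori correlate the mutation counts on different branches. The clean route is a spine (many-to-one) decomposition of the Bernoulli-sampled tree that exhibits, branch by branch, the retained-descendant structure as an independent thinning, so that the Poisson mutation marks survive the conditioning and factorize across branches. Carrying out this decomposition and checking that the thinning probabilities integrate to the stated inhomogeneous rate---in particular that the $N_{T-t}$ factors cancel, leaving the deterministic $\lambda F_t(1-y)$---is where the bulk of the technical effort will lie. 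The final claim, that each mutational event carries an independent $\mathrm{Poisson}(\nu)$ number of mutations, is then immediate from the model and the marking theorem.
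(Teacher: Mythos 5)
Your steps 1 and 2 match what the paper does: the reduction of uniform$(n)$ sampling to a mixture over $y$ of Bernoulli$(y)$ samples conditioned to have size $n$, and the coalescent point process description of the branch lengths, are quoted from Lambert \cite{lambert2018coalescent} in the paper as well, which only adds the derivation of $\delta_t$ and of $q(y,t)=F_t(1-y)$ from the geometric law of $N_t$ conditioned on survival. The genuine gap is in step 3, and you flag it yourself: the claim that, conditional on $Y_{n,T}=y$, the branch mutations form \emph{independent} inhomogeneous Poisson processes with deterministic rate $\lambda F_t(1-y)$, with atoms exactly at the coalescence times, is the entire content of the proposition, and what you offer for it is a mean-field identity (aggregate intensity divided by the expected number of branches) that you concede is a heuristic, plus an appeal to an unspecified spine decomposition ``where the bulk of the technical effort will lie.'' A single-spine many-to-one formula yields only first-moment (intensity) information, so it cannot produce the joint Poisson structure or the independence across branches; what you would need is a many-to-few decomposition of the Bernoulli-sampled tree conditioned on the retention pattern, which is essentially equivalent to re-deriving the marked coalescent point process you are trying to prove. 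There is also a concrete obstruction to running your thinning picture branch by branch in the original (child-carries-mutation) model: a branch mutation at depth $t$ sits at a birth event where the branch's lineage passes through the \emph{child}, i.e., at the birth time of a lineage individual, and going backward along an ancestral lineage these events are governed by the age structure and sampling tilt of the lineage; they do \emph{not} occur at rate $\lambda$ and then get thinned by an independent factor $F_t(1-y)$. Computing their rate is precisely the hard part (for $n=1$ it is the content of the Cheek--Johnston theorem), so your heuristic silently assumes what must be proved.

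The paper's missing idea is a parent--child swap that removes exactly this obstruction. By the Markov property, at every birth event the child's subtree and the parent's continuation are independent birth--death trees of the same age and the same law, so reassigning every non-branchpoint mutation from the child to the parent leaves the joint distribution of the coalescent tree together with its mutation positions unchanged (compare Figures 4 and 5 of the paper). After the swap, one works in Lambert's contour-process representation \cite{lambert2010contour}: along the descent of the contour corresponding to a given branch, births off the lineage occur at rate $\lambda$, and each independently spawns a subtree of age $t$ containing a sampled leaf with probability $1-q(y,t)=1-F_t(1-y)$. The first success is the red event (coalescence) terminating the branch; every failure is now, thanks to the swap, a mutation sitting on the branch itself. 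The Poisson thinning theorem then immediately gives that red and blue events are independent Poisson processes with rates $\lambda(1-q(y,t))$ and $\lambda q(y,t)$, and independence across branches is inherited from the independence of the pieces of the contour exploration. Conditioning the Bernoulli$(y)$ construction to have sample size $n$ then amounts to conditioning each red process to jump before $T$, which produces the stated density of $H_{i,n,T}$ and leaves the blue processes untouched. If you replace your proposed spine decomposition by this swap-plus-contour argument, no substantial new technical work remains beyond the computation of $F_t(1-y)$, which you already have.
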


We are interested in the case when the sample size is much smaller than the population size, so the sampling probability $Y_{n, T}$ is close to 0. To interpret the time-inhomogeneity, $F_t (1-y)$ is the probability conditional on $Y_{n, T}=y$, that an individual born $t$ time units before the sampling will have no descendants alive in the sample. As $t\rightarrow0$, we have $F_t(1-y)\rightarrow 1-y\approx 1$, which is the probability that the newborn is not sampled. As $t\rightarrow\infty$, we have $F_t(1-y)\rightarrow \mu/\lambda<1$, which is the probability that the subtree generated by the newborn goes extinct. Therefore, we see more mutations along the branches near the bottom part of the tree.

To compare our result with that of Cheek and Johnston \cite{cheek2023ancestral}, observe that the density function of $Y_{n,T}$ with $n=1$ is
$$
f_{Y_{1,T}}(y) = \frac{\delta_T}{(y+\delta_T-y\delta_T)^2}=\frac{r(\lambda e^{rT}-\mu)}{(y\lambda e^{rT}-y\lambda+r)^2}.
$$
Using the formula for $F_t(\cdot)$, one can compute the density function of the random variable $S$ in \cite{cheek2023ancestral} to be
$$
\frac{F_T'(s)}{1-F_T(0)} = \frac{r(\lambda e^{rT}-\mu)}{\left((1-s)\lambda e^{rT}-(1-s)\lambda+r\right)^2}.
$$
Therefore, the random variable $S$ agrees with $1-Y_{1,T}$. To interpret $r_l(s,t)$ in \cite{cheek2023ancestral} correctly, note that the total rate of birth and death events is $\lambda+\mu$ and the probability of a size two reproduction event, i.e., a birth event, is $p_2=\lambda/(\lambda+\mu)$. Therefore, we have
$$
r_2(s,t) = (\lambda+\mu)\cdot 2\cdot\frac{\lambda}{\lambda+\mu} F_{T-t}(s)=2\lambda F_{T-t}(s).
$$
The extra constant 2 comes from the fact that only the children get mutations in our model, and the discrepancy between $T-t$ and $t$ is because time goes forward in \cite{cheek2023ancestral} and backward in Proposition \ref{Proposition: construction}.

\subsection{The site frequency spectrum}
A commonly used summary statistic for mutational data is the \emph{site frequency spectrum}. The site frequency spectrum for $n$ individuals at time $T$ consists of $(M^1_{n,T}, M^2_{n,T}, \dots, M^{n-1}_{n,T})$, where $M^i_{n,T}$ is the number of mutations inherited by $i$ individuals. Durrett \cite{durrett2013population} considered a supercritical birth and death process where mutations occur at a constant rate along the branches with rate $\nu$. He considered sampling the entire population and obtained that as the sampling time $T\rightarrow\infty$, the number of mutations affecting at least a fraction $x$ of the population is approximately
$\nu/\lambda x$. Similar results also appear in \cite{bozic2016quantifying,williams2016identification}. Durrett \cite{durrett2013population} also considered taking a sample of size $n$ and showed that as the sampling time $T\rightarrow\infty$, for $2\le k\le n-1$
\begin{equation}\label{Introduction: Durrett}
\lim_{T\rightarrow\infty} \E[M^{k}_{n,T}] = \frac{n\nu}{k(k-1)}.
\end{equation} 
Gunnarsson, Leder, and Foo \cite{gunnarsson2021exact} considered models where the mutations occur at the times of reproduction events or at a constant rate along the branches. In both models, they computed the exact site frequency spectrum when the sample consists of the entire population or the individuals with an infinite line of descendants, at both a fixed time and a random time when the population size reaches a fixed level. Lambert \cite{lambert2010contour} and Lambert and Stadler \cite{lambert2013birth} developed a framework to study the site frequency spectrum when each individual is sampled independently with probability $p$, conditioned to have $n$ individuals. Following this approach, Delaporte, Achaz, and Lambert \cite{delaporte2016mutational} computed $\E[M^k_{n, T}]$ for critical birth and death process, for both fixed $T$ and for an unknown $T$ with some improper prior. Dinh et al. \cite{dihn2020statistical} computed $\E[M^{k}_{n, T}]$ for supercritical birth and death process. As noted by Ignatieva, Hein, and Jenkins \cite{ignatieva2020characterisation}, if we take the sampling probability $p\rightarrow0$ in \cite{dihn2020statistical}, then one can recover the $1/k(k-1)$ shape in the site frequency spectrum as in formula \eqref{Introduction: Durrett}. 

Beyond the expected value of $M^k_{n, T}$, there are other works on its asymptotic behavior. Gunnarsson, Leder, and Zhang \cite{gunnarsson2023limit} considered a supercritical Galton-Watson process and sampled the entire population. They proved a strong law of large numbers for the site frequency spectrum at deterministic times and a weak law of large numbers at random times when the population size reaches a deterministic level. Schweinsberg and Shuai \cite{schweinsberg2023asymptotics} considered a sample of size $n$ from supercritical or critical birth and death processes. As the sample size and the population size go to infinity appropriately, they established the asymptotic normality for the length of the branches that support $k$ leaves. These results can then be translated into the asymptotic normality of the site frequency spectrum, assuming that mutations occur at a constant rate along the branches.

In this paper, since we assume mutations occur at the times of reproduction events, the site frequency spectrum is characterized by the number of reproduction events. Indeed, let $R^k_{n, T}$ (resp. $R^{\ge 2}_{n, T}$) be the number of
reproduction events in which the child has $k$ (resp. at least 2) descendants in the sample. Then conditional on $(R^1_{n,T}, R^2_{n,T}, \dots, R^{n-1}_{n,T})$, $M^i_{n,T}$ has a Poisson distribution with mean $\nu R^i_{n,T}$, and the $M^i_{n,T}$ are independent of one another. Therefore, we will focus on $R^k_{n,T}$ and $R^{\ge 2}_{n,T}$. We show the following theorem.
\begin{theorem}\label{Theorem: LLN}
Let $\{T_n\}_{n=1}^\infty$ be a sequence such that
\begin{equation}\label{Introduction: sequence condition, LLN}   
\lim_{n\rightarrow\infty} ne^{-rT_n} = 0.
\end{equation}
Then for any fixed $k\ge 2$
$$
\frac{R^k_{n,T_n}}{n} \stackrel{P}{\longrightarrow} \frac{\lambda}{rk(k-1)},
$$
where ``$\stackrel{P}{\longrightarrow}$" denotes convergence in probability as $n\rightarrow\infty$.
\end{theorem}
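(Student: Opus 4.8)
The plan is to establish $L^2$ convergence, i.e.\ $\E[R^k_{n,T_n}]/n\to \lambda/(rk(k-1))$ together with $\Var(R^k_{n,T_n})=o(n^2)$, and then invoke Chebyshev's inequality. I would work conditionally on $Y_{n,T}=y$ and with the comb of Figure~\ref{Fig: generating the coalescent tree}. Write $p_y(t)=\P(H_{i,n,T}>t\mid Y_{n,T}=y)$ for the branch survival function, so that $p_y(0)=1$, $p_y(T)=0$, and the conditional density of $H_{i,n,T}$ equals $-p_y'$. Conditionally on $Y_{n,T}=y$ the indicators $\mathbf{1}_{\{H_{i,n,T}>t\}}$ for $1\le i\le n-1$ are i.i.d.\ Bernoulli$(p_y(t))$, while line $0$ is always present. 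A branch present at height $t$ supports exactly $k$ leaves iff the next $k-1$ lines to its right are absent and the $k$-th is present, so the number $L_k(t)$ of branches supporting $k$ leaves at height $t$ has $\E[L_k(t)\mid Y_{n,T}=y]=n\,p_y(t)^2(1-p_y(t))^{k-1}$, up to boundary corrections that are negligible after integration.

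The reproduction events recorded by $R^k_{n,T}$ come in two families. Each of the $n-1$ coalescences is a reproduction event whose child is the clade of size $a_i$ that has coalesced into leaf $i$'s lineage by height $H_{i,n,T}$; since $\P(a_i=k\mid H_{i,n,T}=h)=p_y(h)(1-p_y(h))^{k-1}$, this family contributes $n\int_0^T(-p_y'(h))\,p_y(h)(1-p_y(h))^{k-1}\,dh$ to $\E[R^k_{n,T}\mid Y_{n,T}=y]$. Along the branches the reproduction events occur at rate $\lambda F_t(1-y)$ by Proposition~\ref{Proposition: construction}, and one on a branch supporting $k$ leaves contributes to $R^k$, so this family contributes $\lambda\int_0^T\E[L_k(t)\mid Y_{n,T}=y]\,F_t(1-y)\,dt$. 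The substitution $u=p_y(t)$ is the engine of the computation: from the closed forms in Proposition~\ref{Proposition: construction} one gets $p_y(t)=r/(r+y\lambda(e^{rt}-1))$ up to $e^{-rT}$ corrections, to leading order $dt=-du/(ru(1-u))$, and, expressing $F_t(1-y)$ through $u$ and letting $y\lambda\to 0$, $F_t(1-y)=(\mu+ru)/\lambda+o(1)$. Both families then reduce to Beta integrals,
\[
\int_0^1 u(1-u)^{k-1}\,du=\frac{1}{k(k+1)},\qquad
\frac1r\int_0^1 u(1-u)^{k-2}(\mu+ru)\,du=\frac{1}{k(k-1)}\Big(\frac{\mu}{r}+\frac{2}{k+1}\Big),
\]
and adding them, using $\mu+r=\lambda$, collapses the total to $\lambda/(rk(k-1))$.

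To remove the conditioning, observe that $\P(Y_{n,T}\le y)=(1+\delta_T(1-y)/y)^{-n}$, so under the hypothesis $ne^{-rT_n}\to0$ the variable $Y_{n,T_n}$ is of order $n\delta_{T_n}\to0$ and tends to $0$ in probability. The limit above depends on $y$ only through $y\lambda\to0$, whence $\E[R^k_{n,T_n}\mid Y_{n,T_n}]/n\to\lambda/(rk(k-1))$ in probability; combined with a uniform bound $\E[R^k_{n,T_n}\mid Y_{n,T_n}=y]/n\le C_k$, obtained by estimating the same $u$-integral uniformly in $y$ and using $R^k_{\mathrm{coal}}\le n-1$, bounded convergence gives the mean. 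For the variance I use $\Var(R^k)=\E[\Var(R^k\mid Y_{n,T})]+\Var(\E[R^k\mid Y_{n,T}])$; the second term is $o(n^2)$ since $\E[R^k\mid Y_{n,T}]/n$ is bounded and converges in probability to a constant. Splitting $R^k=R^k_{\mathrm{coal}}+R^k_{\mathrm{branch}}$, the count $R^k_{\mathrm{coal}}=\#\{i:a_i=k\}$ has range-$k$ dependence, each $a_i$ depending only on $H_i,\dots,H_{i+k}$, so $\Var(R^k_{\mathrm{coal}}\mid Y_{n,T})=O(n)$; conditionally on the tree $R^k_{\mathrm{branch}}$ is Poisson, and since $\Cov(L_k(s),L_k(t))=O(n)$ uniformly in $s,t$ and $\int_0^T F_t(1-y)\,dt=O(T_n)$, one gets $\Var(R^k_{\mathrm{branch}}\mid Y_{n,T})=O(n)+O(nT_n^2)=o(n^2)$, using $T_n=O(\log n)$. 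Hence $\Var(R^k_{n,T_n})=o(n^2)$ and Chebyshev's inequality completes the proof.

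I expect the main obstacle to be the uniform asymptotic evaluation of the conditional mean: correctly splitting the reproduction events into the coalescent and branch families and identifying the clade size attached to each coalescent mutation, verifying that the $e^{-rT_n}$ corrections and the moving lower endpoint $p_y(T_n)$ of the $u$-integral vanish in the limit, and securing the $y$-uniform domination that lets me integrate out $Y_{n,T_n}$. The variance estimate is comparatively routine, relying only on the finite-range dependence of the comb and the conditional Poisson structure of the branch mutations.
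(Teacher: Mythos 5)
Your route is genuinely different from the paper's. The paper never works with the exact conditional law given $Y_{n,T_n}$: it couples $(Y_{n,T_n},H_{i,n,T_n})$ with the approximating variables $(n\delta_{T_n}/W,H_i)$ of Section \ref{Subsection: approximation for the sampling probability and branch length} (Lemma \ref{Lemma: error in the approximation of Y and H}, Corollary \ref{Corollary: error bounds}), and then proves Proposition \ref{Proposition: lln} by splitting the stationary, finite-range-dependent array $(R^k_i)$ into $k+1$ i.i.d.\ subsequences and applying the ordinary law of large numbers, with the mean identified in \eqref{Proof of theorems, moment computation, R^k mean}. You instead condition on $Y_{n,T}=y$, split $R^k_{n,T}$ into coalescent (red) and branch (blue) events, and compute first and second conditional moments directly, aiming for Chebyshev. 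Your mean computation is correct and is in fact the paper's computation \eqref{Proof of theorems, moment computation, R^k_i mean} in disguise: under $u=1/(1+e^{-s})$ your two Beta integrals become the paper's red term $\frac{1}{k(k+1)}$ and blue term $\frac{1}{r}\int_{-\infty}^{\infty}\frac{e^{2s}}{(1+e^s)^{k+1}}\left(\mu+\frac{r}{1+e^{-s}}\right)ds$, and your identification of the clade attached to each coalescence matches Remark \ref{Remark: def of R^k_i,n,T}.

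The genuine gap is in the variance step, where you conclude $\Var(R^k_{\mathrm{branch}}\mid Y_{n,T})=O(n)+O(nT_n^2)=o(n^2)$ ``using $T_n=O(\log n)$.'' Condition \eqref{Introduction: sequence condition, LLN} is equivalent to $rT_n-\log n\rightarrow\infty$; it bounds $T_n$ from \emph{below}, not above. Nothing prevents $T_n=n$, in which case your bound $O(nT_n^2)=O(n^3)$ says nothing, so as written the Chebyshev argument fails exactly in the regime of very large $T_n$. The claim you actually need is true, but you must exploit decay of the covariance in time rather than the crude bounds $\Cov(L_k(s),L_k(t))=O(n)$ and $\int_0^{T_n}F_t(1-y)\,dt=O(T_n)$. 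Since $\mathbbm{1}_i(s)$ and $\mathbbm{1}_j(t)$ are independent unless $|i-j|\le k$, and for indicators $|\Cov(\xi,\eta)|\le\sqrt{\E[\xi]\E[\eta]}$, you get
\begin{equation*}
\Var\left(\lambda\int_0^{T_n}L_k(t)F_t(1-y)\,dt\ \Big|\ Y_{n,T_n}=y\right)\le (2k+1)\,n\lambda^2\left(\int_0^{T_n}p_y(t)\bigl(1-p_y(t)\bigr)^{(k-1)/2}\,dt\right)^2,
\end{equation*}
and your own substitution $u=p_y(t)$, $dt\approx -du/(ru(1-u))$, turns the inner integral into approximately $\frac{1}{r}\int_0^1(1-u)^{(k-3)/2}du=\frac{2}{r(k-1)}<\infty$ for every $k\ge 2$, uniformly in $T_n$. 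Hence the branch variance is actually $O(n)$: branches supporting exactly $k\ge2$ leaves exist only in a time window of width $O(1/r)$ around $T_n-(\log n)/r$, which is precisely the information the $O(nT_n^2)$ bound discards. (This is also why the paper's recentered variables $H_i=T_n-(\log n+\log(1/W)+U_i)/r$ never see the length of $[0,T_n]$ at all.) With this repair, together with the $y$-uniform domination you already flagged as needed for integrating out $Y_{n,T_n}$, your argument goes through.
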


\begin{corollary}\label{Corollary: number of mutations inherited by k individuals}
Under the same condition as Theorem \ref{Theorem: LLN}, we have
$$
\frac{M^k_{n,T_n}}{n}\stackrel{P}{\longrightarrow} \frac{\lambda \nu}{rk(k-1)}.
$$
\end{corollary}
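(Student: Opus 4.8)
The plan is to deduce the corollary from Theorem~\ref{Theorem: LLN} together with the conditional Poisson structure of the mutations. Recall from the discussion preceding Theorem~\ref{Theorem: LLN} that, conditional on the vector $(R^1_{n,T_n},\dots,R^{n-1}_{n,T_n})$, the number $M^k_{n,T_n}$ has a Poisson distribution with mean $\nu R^k_{n,T_n}$; since this conditional law depends on the vector only through $R^k_{n,T_n}$, the tower property gives that conditional on $R^k_{n,T_n}$ alone, $M^k_{n,T_n}$ is Poisson with mean $\nu R^k_{n,T_n}$. Writing $c = \lambda/(rk(k-1))$, I would split
$$
\frac{M^k_{n,T_n}}{n} - \nu c = \frac{M^k_{n,T_n} - \nu R^k_{n,T_n}}{n} + \nu\left(\frac{R^k_{n,T_n}}{n} - c\right).
$$
The second term converges to $0$ in probability directly from Theorem~\ref{Theorem: LLN}, so it remains to control the first, fluctuation term.

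For the fluctuation term, I would condition on $R^k_{n,T_n}$. Since a Poisson$(\nu m)$ variable has mean and variance both equal to $\nu m$, we have $\E[M^k_{n,T_n} - \nu R^k_{n,T_n} \mid R^k_{n,T_n}] = 0$ and $\Var(M^k_{n,T_n} - \nu R^k_{n,T_n}\mid R^k_{n,T_n}) = \nu R^k_{n,T_n}$, so the conditional Chebyshev inequality gives, for any $\eps>0$,
$$
\P\left(\left|\frac{M^k_{n,T_n}-\nu R^k_{n,T_n}}{n}\right|>\eps \;\middle|\; R^k_{n,T_n}\right) \le \frac{\nu R^k_{n,T_n}}{\eps^2 n^2}.
$$
The obstacle here is that the right-hand side still contains the random quantity $R^k_{n,T_n}$, whereas Theorem~\ref{Theorem: LLN} only supplies convergence in probability, not a moment bound. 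To get around this, I would fix $\delta>0$ and work on the event $A_n = \{R^k_{n,T_n} \le (c+\delta)n\}$, which is measurable with respect to $R^k_{n,T_n}$ and satisfies $\P(A_n)\to 1$ by Theorem~\ref{Theorem: LLN}. On $A_n$ the bound above is at most $\nu(c+\delta)/(\eps^2 n)$, so multiplying by $\mathbf{1}_{A_n}$ and taking expectations yields
$$
\P\left(\left|\frac{M^k_{n,T_n}-\nu R^k_{n,T_n}}{n}\right|>\eps\right) \le \P(A_n^c) + \frac{\nu(c+\delta)}{\eps^2 n},
$$
and both terms tend to $0$ as $n\to\infty$.

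Combining the two estimates through the elementary bound $\P(|a+b|>\eps')\le \P(|a|>\eps'/2)+\P(|b|>\eps'/2)$ applied to the decomposition above shows that $M^k_{n,T_n}/n \to \nu c = \lambda\nu/(rk(k-1))$ in probability, which is the claim. The only delicate point is the interchange of conditioning with the event $A_n$, and this is legitimate precisely because $A_n$ depends only on $R^k_{n,T_n}$, so $\mathbf{1}_{A_n}$ remains measurable after conditioning on $R^k_{n,T_n}$ and the tower property applies cleanly. No input beyond Theorem~\ref{Theorem: LLN} and the Poisson mean-variance identity is needed.
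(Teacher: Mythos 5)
Your proof is correct, and it follows essentially the same route as the paper: the paper states this corollary without a separate proof, treating it as an immediate consequence of Theorem \ref{Theorem: LLN} together with the conditional Poisson structure of $M^k_{n,T_n}$ given $R^k_{n,T_n}$, and your decomposition of $M^k_{n,T_n}/n$ into the Poisson fluctuation term plus $\nu R^k_{n,T_n}/n$ is exactly the rigorous fleshing-out of that reasoning (it also mirrors the decomposition the paper uses in its sketch of proof of Corollary \ref{Corollary: number of mutations}). Your truncation on the event $\{R^k_{n,T_n}\le (c+\delta)n\}$ to handle the randomness of the conditional variance is the standard and correct way to close the gap that convergence in probability gives no moment control.
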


Under a stronger assumption on the sample size and the population size, we can establish a central limit theorem for $R^{\ge 2}_{n, T_n}$. The corresponding $M^{\ge 2}_{n, T_n}=\sum_{i=1}^{n-1}M^{i}_{n, T_n}$ represents the number of mutations shared by at least two individuals in the sample. The quantity $M^{\ge 2}_{n, T_n}$ has been applied to estimate the growth rate of a tumor in \cite{johnson2023clonerate}. We believe an analogous result should hold for $R^{k}_{n, T_n}$, but the covariance computation is more involved.
\begin{theorem}\label{Theorem}
Let $\{T_n\}_{n=1}^{\infty}$ be a sequence such that 
\begin{equation}\label{Introduction: sequence condition}
\lim_{n\rightarrow\infty} n^{3/2}(\log n) e^{-rT_n} = 0.
\end{equation}
Then
$$
\frac{1}{\sqrt{n}}\left(R^{\ge 2}_{n,T_n}-\frac{n\lambda}{r}\right) \Rightarrow N\left(0, \frac{\lambda^2}{r^2}\right),
$$
where $``\Rightarrow"$ denotes convergence in distribution as $n\rightarrow\infty$.
\end{theorem}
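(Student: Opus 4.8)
The plan is to argue conditionally on the sampling probability $Y_{n,T}$, for which Proposition \ref{Proposition: construction} guarantees that the branch lengths $H_{1,n,T},\dots,H_{n-1,n,T}$ are i.i.d.\ with density \eqref{Introduction, density of H_i,n,T conditional on Y_n,T} and that the reproduction events are produced by independent Poisson processes. The first step is to turn $R^{\ge2}_{n,T}$ into an explicit functional of the i.i.d.\ sequence $(H_{i,n,T})$. A reproduction event with at least two sampled descendants is either one of the $n-1$ coalescent events whose merging lineage already carries at least two leaves, or a branch event on an edge subtending at least two leaves. For the coalescent events, the left-to-right merging rule of the construction makes the lineage joining at time $H_{i,n,T}$ a single leaf exactly when $H_{i+1,n,T}>H_{i,n,T}$, so their contribution is the descent count $C_{n,T}=\#\{1\le i\le n-2:\ H_{i,n,T}>H_{i+1,n,T}\}$. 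For the branch events, writing $H_{0,n,T}=T$, the number of ancestral lineages at time $t$ before sampling is $A(t)=\sum_{i=0}^{n-1}\mathbf 1_{\{H_{i,n,T}>t\}}$ and the number of singleton lineages is $S(t)=\sum_{i=0}^{n-2}\mathbf 1_{\{\min(H_{i,n,T},H_{i+1,n,T})>t\}}+\mathbf 1_{\{H_{n-1,n,T}>t\}}$, so conditionally on the tree the branch contribution $B_{n,T}$ is Poisson with mean $\lambda\int_0^T F_t(1-y)\big(A(t)-S(t)\big)\,dt$. Thus $R^{\ge2}_{n,T}=C_{n,T}+B_{n,T}$, and every ingredient is a sum of one-dependent indicators of the i.i.d.\ variables $H_{i,n,T}$ layered with an independent Poisson fluctuation; this is the structural simplification that makes the second moment tractable, and is presumably why $R^{\ge2}_{n,T}$ is more amenable than a general $R^{k}_{n,T}$.

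Next I would compute the first two moments conditionally on $Y_{n,T}=y$. Because $C_{n,T}$, $A(t)$ and $S(t)$ are one-dependent sums of indicators of i.i.d.\ variables with the explicit law \eqref{Introduction, density of H_i,n,T conditional on Y_n,T}, their means and covariances reduce to one- and two-dimensional integrals. Carrying these out and integrating the branch intensity, I expect $\E[R^{\ge2}_{n,T}\mid Y_{n,T}=y]=n\lambda/r+o(\sqrt n)$ uniformly over the effective support of $Y_{n,T}$, the constant $\lambda/r$ arising exactly as the telescoping sum $\sum_{k\ge2}1/(k(k-1))=1$ that already underlies Theorem \ref{Theorem: LLN}. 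For the variance I would use $\Var(R^{\ge2}_{n,T})=\E[\Var(R^{\ge2}_{n,T}\mid Y_{n,T})]+\Var(\E[R^{\ge2}_{n,T}\mid Y_{n,T}])$; the uniform control of the conditional mean forces the second term to be $o(n)$, so the limiting variance comes from $\Var(C_{n,T}\mid y)+\Var(B_{n,T}\mid y)+2\Cov(C_{n,T},B_{n,T}\mid y)$, which I would show converges to $n\lambda^2/r^2$ uniformly in $y$.

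Finally I would promote this to convergence in distribution. Conditionally on $Y_{n,T}=y$, the descent count $C_{n,T}$ is a one-dependent bounded triangular array and $B_{n,T}$ is Poisson with a mean that is a smooth one-dependent functional of the same array, so a Lindeberg--Lyapunov central limit theorem for one-dependent arrays, together with the conditionally independent Poisson noise, yields joint asymptotic normality of $(C_{n,T},B_{n,T})$ and hence of $C_{n,T}+B_{n,T}$ after centering by $n\lambda/r$ and scaling by $\sqrt n$. Since both the conditional mean (to order $o(\sqrt n)$) and the conditional variance limit $\lambda^2/r^2$ are independent of $y$, removing the conditioning on $Y_{n,T}$ by dominated convergence of characteristic functions gives the stated Gaussian limit $N(0,\lambda^2/r^2)$. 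The role of the hypothesis \eqref{Introduction: sequence condition} is to make the finite-horizon corrections to these idealized quantities negligible: the coalescent region sits within $\asymp(\log n)/r$ of the root, and bounding the discrepancy between the finite-$T_n$ intensities and their $T\to\infty$ limits, accumulated over the $\Theta(n)$ reproduction events, produces errors of order $n^{3/2}(\log n)e^{-rT_n}$, which explains why \eqref{Introduction: sequence condition} must be strengthened relative to the law-of-large-numbers condition \eqref{Introduction: sequence condition, LLN}. The main obstacle I anticipate is the exact covariance computation: controlling $\Cov(C_{n,T},B_{n,T}\mid y)$ and the contribution of the singleton process $S(t)$ finely enough to land on the clean constant $\lambda^2/r^2$ rather than merely its order.
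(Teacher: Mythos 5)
Your proposal is correct and follows essentially the same route as the paper: decompose $R^{\ge 2}_{n,T}$ branch by branch into coalescent-event indicators $\mathbbm{1}_{\{H_{i+1}\le H_i\}}$ plus Poisson counts along edges subtending at least two leaves, prove a CLT for the resulting one-dependent array conditionally on the sampling probability, obtain the limiting mean and covariances by integrating against the logistic-type law of the branch lengths, and decondition via dominated convergence of characteristic functions. The only notable difference is technical rather than conceptual: the paper first couples $(Y_{n,T_n},H_{i,n,T_n})$ with the idealized variables $(n\delta_{T_n}/W,H_i)$ of Johnson et al.\ (Lemma \ref{Lemma: error in the approximation of Y and H}, Corollary \ref{Corollary: error bounds}) and conditions on $W$ rather than on $Y_{n,T_n}$ itself---which is precisely where hypothesis \eqref{Introduction: sequence condition} enters---so your claimed uniformity in $y$ over the effective support is stronger than what is needed (or true); pointwise convergence in the scaled variable $w$ with $y=n\delta_{T_n}/w$ suffices, exactly as in the paper.
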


\begin{corollary}\label{Corollary: number of mutations}
Under the same condition as Theorem \ref{Theorem}, we have
$$
\frac{1}{\sqrt{n}}\left(M^{\ge 2}_{n,T_n}-\frac{n\lambda\nu}{r}\right)\Rightarrow N\left(0,\frac{\lambda^2\nu^2}{r^2}+\frac{\lambda\nu}{r}\right).
$$
\end{corollary}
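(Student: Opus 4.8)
The plan is to derive this from the central limit theorem for $R^{\ge 2}_{n,T_n}$ in Theorem \ref{Theorem} together with the conditional Poisson structure of the mutations recorded in the introduction. Since, conditional on $(R^1_{n,T_n},\dots,R^{n-1}_{n,T_n})$, the $M^i_{n,T_n}$ are independent with $M^i_{n,T_n}\sim\mathrm{Poisson}(\nu R^i_{n,T_n})$, and a sum of independent Poisson variables is again Poisson, it follows that conditional on $R^{\ge 2}_{n,T_n}$ the variable $M^{\ge 2}_{n,T_n}$ is $\mathrm{Poisson}(\nu R^{\ge 2}_{n,T_n})$. I would then split
$$
\frac{1}{\sqrt{n}}\left(M^{\ge 2}_{n,T_n}-\frac{n\lambda\nu}{r}\right) = \nu\xi_n+\eta_n,
$$
where $\xi_n:=n^{-1/2}(R^{\ge 2}_{n,T_n}-n\lambda/r)$ and $\eta_n:=n^{-1/2}(M^{\ge 2}_{n,T_n}-\nu R^{\ge 2}_{n,T_n})$, so that $\xi_n$ is the rescaled, centered reproduction count governed by Theorem \ref{Theorem}, and $\eta_n$ is the Poisson fluctuation of the mutation count about its conditional mean.

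The only input I would need beyond Theorem \ref{Theorem} is the law of large numbers $R^{\ge 2}_{n,T_n}/n\stackrel{P}{\longrightarrow}\lambda/r$. This is immediate from Theorem \ref{Theorem}, since the convergence in distribution of $\xi_n$ makes $\{\xi_n\}$ tight and hence $R^{\ge 2}_{n,T_n}/n-\lambda/r=\xi_n/\sqrt{n}\stackrel{P}{\longrightarrow}0$. I would also record that $\xi_n$ is a deterministic function of $R^{\ge 2}_{n,T_n}$, so that conditioning on $R^{\ge 2}_{n,T_n}$ fixes $\xi_n$; this is what lets me treat $\eta_n$ as conditionally Poissonian noise sitting on top of $\xi_n$.

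I would carry out the proof through the characteristic function of the sum $\nu\xi_n+\eta_n$, conditioning on $R^{\ge 2}_{n,T_n}$. Using that a $\mathrm{Poisson}(\theta)$ variable has characteristic function $\exp(\theta(e^{iv}-1))$, a direct computation gives
$$
g_n:=\E\!\left[e^{iu\eta_n}\mid R^{\ge 2}_{n,T_n}\right] = \exp\!\left(\nu R^{\ge 2}_{n,T_n}\left(e^{iu/\sqrt{n}}-1-\frac{iu}{\sqrt{n}}\right)\right).
$$
Expanding $e^{iu/\sqrt{n}}-1-iu/\sqrt{n}=-u^2/(2n)+O(n^{-3/2})$ and using $R^{\ge 2}_{n,T_n}/n\stackrel{P}{\longrightarrow}\lambda/r$ (so that the remainder $\nu R^{\ge 2}_{n,T_n}\cdot O(n^{-3/2})$ is $o_P(1)$), the conditional characteristic function $g_n$ converges in probability to the \emph{deterministic} constant $c:=\exp(-\nu u^2\lambda/(2r))$. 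Since $e^{iu\nu\xi_n}$ is $R^{\ge 2}_{n,T_n}$-measurable, I can then write
$$
\E[e^{iu(\nu\xi_n+\eta_n)}]=\E[e^{iu\nu\xi_n}\,g_n]=\E[e^{iu\nu\xi_n}(g_n-c)]+c\,\E[e^{iu\nu\xi_n}].
$$
The first term is bounded in modulus by $\E[|g_n-c|]\to0$ by bounded convergence (as $|g_n|\le1$ and $|c|\le1$), while the second converges to $c\exp(-u^2\nu^2\lambda^2/(2r^2))$ by Theorem \ref{Theorem}. The limit is $\exp(-\frac{u^2}{2}(\nu^2\lambda^2/r^2+\nu\lambda/r))$, the characteristic function of $N(0,\nu^2\lambda^2/r^2+\nu\lambda/r)$, which is the claim.

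I expect the crux to be exactly the step that the Poisson noise $\eta_n$ is \emph{asymptotically independent} of $\xi_n$: since $\eta_n$ is conditionally a fluctuation driven by $R^{\ge 2}_{n,T_n}$, and $\xi_n$ is a function of the same variable, the two limiting Gaussians cannot simply be added as though independent without justification. The resolution is the observation that the conditional characteristic function $g_n$ tends to a \emph{deterministic} constant, which decouples $\eta_n$ from the conditioning in the limit. The only quantitative estimate needed is that $\nu R^{\ge 2}_{n,T_n}=O_P(n)$ controls the Taylor remainder, and this is supplied by the law of large numbers above; the sequence condition \eqref{Introduction: sequence condition} enters only through its use in Theorem \ref{Theorem}.
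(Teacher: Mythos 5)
Your proposal is correct and follows essentially the same route as the paper: the paper's proof sketch uses the identical decomposition $M^{\ge 2}_{n,T_n}=\frac{n\lambda\nu}{r}+\sqrt{n\lambda\nu/r}\,A_n+\sqrt{n}\,\nu C_n$ (your $\eta_n$ and $\nu\xi_n$, up to normalization) and asserts that $(A_n,C_n)$ converge jointly to independent Gaussians, deferring details to Proposition 11 of \cite{johnson2023clonerate}. Your conditional characteristic-function argument, showing $\E[e^{iu\eta_n}\mid R^{\ge 2}_{n,T_n}]$ converges in probability to a deterministic constant and thereby decouples the two fluctuations, is a correct and self-contained way of supplying exactly the asymptotic-independence step that the paper's sketch leaves to the reference.
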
 
\begin{remark}
Results similar to Theorem \ref{Theorem: LLN} and Corollary \ref{Corollary: number of mutations} have been established when the mutations occur at a constant rate $\widetilde{\nu}$ along the branches. We take $\widetilde{\nu}=\lambda \nu$ so that the expected number of mutations per unit of time is the same under both models. Schweinsberg and Shuai \cite{schweinsberg2023asymptotics} showed the following result, which is similar to Theorem \ref{Theorem: LLN}.
\end{remark}
\begin{theorem}[Theorem 1.3 of \cite{schweinsberg2023asymptotics}]
Let $k\ge 2$ be a fixed integer. Assume that 
\begin{equation*}
\lim_{n\rightarrow\infty}n e^{-r_n T_n}=0.
\end{equation*} 
Write $L^{k}_{n, T_n}$ for the total length of the branches that support $k$ leaves in the coalescent tree of the sample. Then as $n\rightarrow\infty$, 
$$
\frac{r_n}{n}L_{n,T_n}^k\stackrel{P}{\longrightarrow} \frac{1}{k(k-1)}.
$$
\end{theorem}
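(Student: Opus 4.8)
The plan is to read the statistic $L^k_{n,T_n}$ off the planar comb of Proposition \ref{Proposition: construction} and then run a second moment argument conditionally on the sampling probability $Y_{n,T_n}$. In the construction the $n$ spines sit at positions $0,1,\dots,n-1$ with $H_{0,n,T_n}=T_n$ and $H_{1,n,T_n},\dots,H_{n-1,n,T_n}$ i.i.d.\ given $Y_{n,T_n}=y$. Call spine $i$ \emph{alive} at height $t$ if $H_{i,n,T_n}>t$. Since the horizontal lines are drawn leftward until meeting a taller spine, at height $t$ each alive spine collects exactly the leaves lying between it and the next alive spine to its right; hence a lineage supports exactly $k$ leaves iff it begins at an alive spine $i$ followed by $k-1$ dead spines and then an alive spine (or the right boundary). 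Writing $X_i(t)$ for this indicator and $B_k(t)=\sum_i X_i(t)$, we get
\[
L^k_{n,T_n}=\int_0^{T_n} B_k(t)\,dt .
\]
The structural point I would emphasize is that $X_i(t)$ depends only on the window $\{i,i+1,\dots,i+k\}$, so $X_i(t)$ and $X_j(t')$ are independent as soon as $|i-j|>k$.

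I would next compute the conditional first moment. With $q(t)=\P(H_{1,n,T_n}>t\mid Y_{n,T_n}=y)$ and $p=1-q$, the density \eqref{Introduction, density of H_i,n,T conditional on Y_n,T} integrates in closed form (its numerator is, up to the normalizing constant, $-r\tfrac{d}{dt}[e^{-rt}/(y\lambda+(r-y\lambda)e^{-rt})]$), giving to leading order $p(t)\approx a/(a+re^{-rt})$ with $a=y\lambda$. Summing the window probabilities yields $\E[B_k(t)\mid Y_{n,T_n}=y]=(n-k-1)q^2p^{k-1}+2qp^{k-1}$, and the substitution $w=re^{-rt}/a$ turns the leading term into
\[
\int_0^{T_n} q^2 p^{k-1}\,dt=\frac1r\int_{w_{\min}}^{w_{\max}}\frac{w}{(1+w)^{k+1}}\,dw\longrightarrow \frac1r\int_0^\infty\frac{w}{(1+w)^{k+1}}\,dw=\frac{1}{r\,k(k-1)},
\]
where $w_{\min}=re^{-rT_n}/a$, $w_{\max}=r/a$. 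The hypothesis $ne^{-rT_n}\to0$ together with the concentration of $Y_{n,T_n}$ from \eqref{Mutations in a sample from a population, uniform(n) sample, density for Y} (which puts $a=Y_{n,T_n}\lambda\asymp n e^{-rT_n}$, hence $w_{\min}\asymp 1/n\to0$ and $w_{\max}\asymp e^{rT_n}/n\to\infty$) sends the endpoints to $0$ and $\infty$. Crucially the limit is independent of $y$, so $\tfrac{r}{n}\E[L^k_{n,T_n}\mid Y_{n,T_n}=y]\to\tfrac1{k(k-1)}$ uniformly over the typical range of $y$, while the $2qp^{k-1}$ boundary term contributes only $O(1)$.

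Finally I would bound the fluctuations. Conditionally on $Y_{n,T_n}$,
\[
\Var\!\big(L^k_{n,T_n}\mid Y_{n,T_n}\big)=\sum_{|i-j|\le k}\Cov\!\Big(\int_0^{T_n}X_i(t)\,dt,\ \int_0^{T_n}X_j(t')\,dt'\Big);
\]
local dependence leaves only $O(nk)=O(n)$ nonzero summands, each bounded via Cauchy--Schwarz by $\E[(\int_0^{T_n}X_i\,dt)^2]$. The decisive estimate is that this is $O(1)$: writing it as $2\int_0^{T_n}\!\int_0^{t'}q(t')^2p(t)^{k-1}\,dt\,dt'$ and passing to $w$-variables, the logarithmic growth of $\int p^{k-1}\,dt$ near $w=0$ is absorbed by the $w/(1+w)^2$ weight, so both endpoints converge and the double integral stays bounded. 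Hence $\Var(L^k_{n,T_n}\mid Y_{n,T_n})=O(n)=o(n^2)$, and combining this with the uniform convergence of the conditional mean and Chebyshev's inequality on a high-probability set of $y$ gives $\tfrac{r}{n}L^k_{n,T_n}\stackrel{P}{\longrightarrow}\tfrac1{k(k-1)}$.

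I expect the variance estimate to be the main obstacle. The comb has a record-type long-range structure, so a crude uniform bound on $\Cov(B_k(t),B_k(t'))$ only yields $O(n\,T_n^2)$, which is useless; one must instead exploit that after the change of variables the time-integrated window indicators have bounded second moment. A second point requiring care is controlling $\Var(\E[L^k_{n,T_n}\mid Y_{n,T_n}])$, which forces one to check that the tails of $Y_{n,T_n}$ do not inflate the conditional mean away from the region $e^{-rT_n}\ll Y_{n,T_n}\ll1$.
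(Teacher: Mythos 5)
A preliminary point: the statement you were asked to prove is not proved anywhere in this paper --- it is quoted verbatim as Theorem 1.3 of \cite{schweinsberg2023asymptotics}, purely for comparison with Theorem \ref{Theorem: LLN} --- so the fair internal comparison is with the paper's proof of its own analogue for reproduction events, namely Proposition \ref{Proposition: lln} together with Corollary \ref{Corollary: error bounds}. Your proposal is correct in outline and takes a genuinely different, more self-contained route. The two arguments share the same skeleton: conditional on the sampling probability the comb heights are i.i.d., the statistic is a sum of window indicators that are $(k+1)$-dependent, and the substitution $w\propto e^{-rt}$ turns the relevant mean into the beta integral $\frac{1}{r}\int_0^\infty w(1+w)^{-(k+1)}dw=\frac{1}{rk(k-1)}$, which is exactly the calculus fact \eqref{Proof of theorems, moment computation, calculus fact} used in \eqref{Proof of theorems, moment computation, R^k_i mean}. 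The differences: the paper first replaces the exact heights $H_{i,n,T_n}$ by the approximation $H_i=T_n-(\log n+\log(1/W)+U_i)/r$ with logistic $U_i$, pays for this with the coupling estimates of Lemma \ref{Lemma: error in the approximation of Y and H} and Corollary \ref{Corollary: error bounds}, and concludes via the law of large numbers for $m$-dependent sequences applied to $k+1$ interleaved i.i.d.\ subsequences; you instead work directly with the exact conditional density \eqref{Introduction, density of H_i,n,T conditional on Y_n,T} (whose survival function is explicit, as you note, because the density is an exact derivative), write $L^k_{n,T_n}=\int_0^{T_n}B_k(t)\,dt$, and close with Chebyshev after the uniform bound $\E[(\int_0^{T_n}X_i\,dt)^2\mid Y_{n,T_n}=y]=O(1)$. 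That bound does hold: in $w$-variables the inner integral $\int p^{k-1}dt$ produces only a $\log(1/w')$ singularity near $w'=0$, which the weight $w'/(1+w')^2$ integrates away, and the approximations $q\approx w/(1+w)$, $p\approx 1/(1+w)$ are uniformly valid on the typical range of $y$ since $p(t)=C(1-e^{-rt})/(1+w-e^{-rt})$ exactly, with $C\to1$. What your route buys: a proof of the branch-length statement itself (which this paper never supplies), no coupling machinery, and only second moments. What the paper's route buys: the coupling plus $m$-dependent framework is reused wholesale for the central limit theorem (Theorem \ref{Theorem}), which a bare Chebyshev argument cannot give. Two execution caveats for you: (i) the quoted theorem allows $n$-dependent rates ($r_n$), whereas your estimates are written for fixed $\lambda,\mu$, so recovering the stated generality requires uniformity in the parameters; (ii) the uniformity over $y$ invoked in both moment steps should be made explicit, e.g.\ by restricting to $\{\eta\le W\le 1/\eta\}$ under $Y_{n,T_n}\approx n\delta_{T_n}/W$, on which $w_{\min}\asymp 1/n\rightarrow0$ and $w_{\max}\asymp e^{rT_n}/n\rightarrow\infty$ uniformly --- but this is exactly the check you flag at the end, so it is a matter of bookkeeping rather than a gap.
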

The hypothesis is similar to \eqref{Introduction: sequence condition, LLN}, and the $1/k(k-1)$ site frequency spectrum is consistent with \eqref{Introduction: Durrett}. Conditional on $L^{k}_{n,T_n}$, the number of mutations inherited by $k$ individuals has a Poisson distribution with mean $\widetilde{\nu}L^{k}_{n,T_n}\approx n\widetilde{\nu}/(r_nk(k-1))=n\lambda \nu/(r_nk(k-1))$, consistent with Theorem \ref{Theorem: LLN}.

Johnson et al. showed a result similar to Corollary \ref{Corollary: number of mutations}. The quantity $M^{in}_n$ therein corresponds to $M^{\ge 2}_{n,T_n}$. In the special case where the rates $\lambda,\mu$ and $\widetilde{\nu}$ are constants, Corollary 2 of \cite{johnson2023clonerate} reduces to the following theorem.
\begin{theorem}[Corollary 2 of \cite{johnson2023clonerate}]
Assume that \eqref{Introduction: sequence condition} holds. Then
$$\frac{1}{\sqrt{n}} \bigg( M_{n,T_n}^{\ge 2} - \frac{n \widetilde{\nu}}{r} \bigg) \Rightarrow N\left(0,\frac{\widetilde{\nu}}{r^2}+\frac{\widetilde{\nu}}{r}\right).$$
\end{theorem}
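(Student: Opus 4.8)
The plan is to condition on the coalescent tree and exploit the conditional Poisson structure of the mutations. Work in the constant-rate model of the Remark, and write $\mathcal{T}$ for the coalescent tree (topology and branch lengths). Given $\mathcal{T}$, the mutations carried by between $2$ and $n-1$ sampled individuals are exactly those falling on branches subtending between $2$ and $n-1$ leaves, so $M^{\ge 2}_{n,T_n}\mid\mathcal{T}$ is Poisson with mean $\widetilde{\nu}L^{\ge 2}_{n,T_n}$, where $L^{\ge 2}_{n,T_n}$ is the total length of those ``internal'' branches. I would write $M^{\ge 2}_{n,T_n}=\widetilde{\nu}L^{\ge 2}_{n,T_n}+\xi_n$ with $\xi_n$ the centered conditional Poisson part, reducing the problem to a central limit theorem for $L^{\ge 2}_{n,T_n}$ together with the superposition of the Poisson layer. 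For the superposition I would argue with characteristic functions: conditioning gives $\E[e^{itM^{\ge 2}_{n,T_n}/\sqrt n}\mid\mathcal{T}]=\exp\bigl(\widetilde{\nu}L^{\ge 2}_{n,T_n}(e^{it/\sqrt n}-1)\bigr)$, and expanding to second order, using $L^{\ge 2}_{n,T_n}/n\to 1/r$, shows that the Poisson layer adds an \emph{independent} Gaussian of variance $\widetilde{\nu}/r$ to whatever limit $\widetilde{\nu}L^{\ge 2}_{n,T_n}$ has.

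The heart of the matter is the CLT for $L^{\ge 2}_{n,T_n}$, for which I would use Proposition~\ref{Proposition: construction}. Conditional on $Y_{n,T_n}=y$ the branch lengths $H_{1,n,T_n},\dots,H_{n-1,n,T_n}$ are i.i.d., and reading the comb in Figure~\ref{Fig: generating the coalescent tree} one sees that at depth $h$ the surviving lineages cut the leaves into consecutive blocks whose sizes are the gaps between successive indices $i$ with $H_{i,n,T_n}>h$. Counting the blocks of size between $2$ and $n-1$ and integrating over $h$ telescopes into the identity
\[ L^{\ge 2}_{n,T_n}=\sum_{k=1}^{n-2}\bigl(H_{k,n,T_n}-H_{k+1,n,T_n}\bigr)^{+}+R_n,\qquad R_n:=\max_{i\ge 1}H_{i,n,T_n}-H_{1,n,T_n}, \]
where the remainder $R_n$ is $O(\log n)$ and contributes nothing at scale $\sqrt n$. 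Conditional on $y$, the leading sum is a sum of a stationary $1$-dependent sequence, so a standard $1$-dependent CLT applies once its mean and variance are identified.

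Identifying those moments is the concrete computation. Setting $q(h)=\P(H_{i,n,T_n}>h\mid y)$ and changing variables to $u=q(h)$ turns each $h$-integral into an integral over $(0,1)$ against $du/(r\,u(1-u))$ in the regime $y\lambda\to 0$. This gives $\E[(H_{k,n,T_n}-H_{k+1,n,T_n})^{+}\mid y]=\int_0^\infty q(1-q)\,dh\to 1/r$, so $\E[L^{\ge 2}_{n,T_n}]\sim n/r$ and the centering is $n\widetilde{\nu}/r$. For the variance the $1$-dependent formula needs $\Var(\zeta_1)$ and the single lag-one covariance $\Cov(\zeta_1,\zeta_2)$ of $\zeta_k:=(H_{k,n,T_n}-H_{k+1,n,T_n})^{+}$; each evaluates to an expression containing $\pi^2/6$, but the two $\pi^2$ terms cancel in $\Var(\zeta_1)+2\,\Cov(\zeta_1,\zeta_2)$, leaving $\Var(L^{\ge 2}_{n,T_n}\mid y)\sim n/r^{2}$. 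Feeding this into the superposition, the length fluctuations contribute variance $\widetilde{\nu}^2/r^2$ and the Poisson layer contributes $\widetilde{\nu}/r$; these sum to $\lambda^2\nu^2/r^2+\lambda\nu/r$, matching Corollary~\ref{Corollary: number of mutations} once $\widetilde{\nu}=\lambda\nu$.

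The step I expect to be the main obstacle is controlling the two effects that conditioning on $Y_{n,T_n}=y$ and on the limiting branch-length law suppress: the finiteness of $T_n$ and the randomness of the sampling probability. The finite horizon biases the conditional mean, $\E[L^{\ge 2}_{n,T_n}\mid Y]=n/r-\tfrac{n}{r^2}\,Y\lambda\,(\log(r/(Y\lambda))-1)+\cdots$, and since $Y_{n,T_n}$ concentrates around a constant multiple of $ne^{-rT_n}$, the resulting bias in $\E[L^{\ge 2}_{n,T_n}]$ is of order $n^{2}(\log n)e^{-rT_n}$; this is $o(\sqrt n)$ exactly when $n^{3/2}(\log n)e^{-rT_n}\to 0$, which is why hypothesis~\eqref{Introduction: sequence condition} takes this form. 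The same estimates control the law-of-total-variance correction $\Var_Y(\E[L^{\ge 2}_{n,T_n}\mid Y])$ and the error from truncating the branch-length distribution to $(0,T_n)$ rather than $(0,\infty)$. Making the $1$-dependent CLT uniform over the (concentrating) law of $Y_{n,T_n}$, and verifying that $R_n$ and all of these global corrections are genuinely lower order, is the delicate part of the argument.
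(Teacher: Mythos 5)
The paper itself does not prove this statement: it is quoted from \cite{johnson2023clonerate} purely as a point of comparison, and the closest in-paper analogue of a proof is the combination of Theorem \ref{Theorem} (an $m$-dependent CLT built on the comb construction and the logistic approximation, carried out conditionally on $W$ and then integrated by dominated convergence) with the sketched proof of Corollary \ref{Corollary: number of mutations} (the conditional Poisson layer on top). Your proposal reproduces exactly that architecture, with the internal length $L^{\ge 2}_{n,T_n}$ playing the role that $R^{\ge 2}_{n,T_n}$ plays in the paper, which is the natural substitution for the constant-rate mutation model. Your structural identity $L^{\ge 2}_{n,T_n}=\sum_{k=1}^{n-2}(H_{k,n,T_n}-H_{k+1,n,T_n})^{+}+\max_{i}H_{i,n,T_n}-H_{1,n,T_n}$ is correct (the piece of the root branch subtending all $n$ leaves is properly excluded), the limits $\E[\zeta_k]\to 1/r$ and $\Var(\zeta_1)+2\Cov(\zeta_1,\zeta_2)\to 1/r^2$ are correct, and the $\pi^2/6$ cancellation you anticipate is precisely what occurs in the paper's own computations \eqref{Proof of theorems, moment computation, i, i} and \eqref{Proof of theorems, moment computation, i, i+1}. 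The technical debts you acknowledge at the end (uniformity of the one-dependent CLT over the law of $Y_{n,T_n}$, the finite-horizon bias, the remainder terms) are exactly what the paper discharges via Lemma \ref{Lemma: error in the approximation of Y and H}, Corollary \ref{Corollary: error bounds}, and the conditioning-on-$W$ argument following Proposition \ref{Proposition: clt}, and your order-of-magnitude accounting for why hypothesis \eqref{Introduction: sequence condition} is needed agrees with the paper's observation \eqref{Introduction: sequence condition, equivalent}.

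The one genuine problem is that what you prove is not the statement as printed: your limiting variance is $\widetilde{\nu}^2/r^2+\widetilde{\nu}/r$, while the theorem asserts $\widetilde{\nu}/r^2+\widetilde{\nu}/r$, and you never reconcile the two. Your value is in fact the correct one: $\widetilde{\nu}/r^2$ is dimensionally inconsistent (it carries units of time, whereas the normalized count is dimensionless), and the paper itself insists that the quoted result ``is the same'' as Corollary \ref{Corollary: number of mutations} after substituting $\widetilde{\nu}=\lambda\nu$, which forces the variance $\lambda^2\nu^2/r^2+\lambda\nu/r=\widetilde{\nu}^2/r^2+\widetilde{\nu}/r$. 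So the printed statement contains a typo ($\widetilde{\nu}$ should be $\widetilde{\nu}^2$ in the first term), and your computation silently corrects it. A complete solution must say this explicitly: as written, your argument terminates at a conclusion that contradicts the displayed formula you set out to establish, and a reader has no way to tell whether you or the statement is in error without redoing the dimensional analysis.
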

The hypothesis is the same as \eqref{Introduction: sequence condition}, and the result is the same when we plug in $\widetilde{\nu}=\lambda\nu$. Therefore, our results establish that the assumption that mutations occur at a constant rate along the branches is not essential to the results in \cite{johnson2023clonerate, schweinsberg2023asymptotics}, and similar results hold when mutations occur only at the birth times, which may be a more natural assumption for some applications.
% \begin{remark}
% The $1/k(k-1)$ power-law spectrum is consistent with \eqref{Introduction: Durrett}. A result similar to Theorem \ref{Theorem: LLN} was proved in Theorem 1.3 of \cite{schweinsberg2023asymptotics}, where mutations are assumed to occur at a constant rate along the branches. In both results, the assumption \eqref{Introduction: sequence condition, LLN} is imposed, which requires the sample size to be much smaller than the expected population size, and the asymptotic means $R^{k}_{n, T_n}/n$ and $L^{k}_{n, T_n}/n$ are the same up to an appropriate choice of the mutation rate $\nu$.
% \end{remark}
% \begin{remark}
% A similar result to Theorem \ref{Theorem: LLN} was proved in Theorem 1 in the supplementary of \cite{johnson2023clonerate}, where the mutations occur at a constant rate along the branches. Both results assumed \eqref{Introduction: sequence condition}, and the asymptotic mean $R^{\ge 2}_{n, T_n}/n$ and $L^{\ge 2}_{n, T_n}/n$ are the same up to appropriate choice of the mutation rate $\nu$. However, the asymptotic variance of $R^{\ge 2}_{n,T_n}/n$ and $L^{\ge 2}_{n,T_n}/n$ are different.
% \end{remark}

Another closely related summary statistic for the mutational data is the \emph{allele frequency spectrum}. For the allele frequency spectrum, the individuals are partitioned into families where each family consists of individuals carrying the same mutations. Then the allele frequency spectrum consists of $(A^1_{n,T}, A^2_{n,T}, \dots, A^n_{n,T})$ where $A^i_{n,T}$ is the number of families of size $i$. Richard \cite{richard2014splitting} considered the allele frequency spectrum of the population in a birth and death process with age-dependent death rates and mutations at birth times. He computed the expected allele frequency spectrum and proved a law of large numbers for the allele frequency spectrum. Champagnat and Lambert considered the case where the death rate is age-dependent and the mutations occur at a constant rate along the branches. They computed the expected allele frequency spectrum when the sample consists of the entire population in \cite{champagnat2012splitting} and studied the size of the largest family and the age of the oldest family in \cite{champagnat2013splitting}.

\section{The contour process of the genealogical tree}
Figure \ref{Fig: the planar representation of a genealogical tree} is the planar representation of a genealogical tree. The ends of the vertical lines are the birth and death times for individuals, and the horizontal lines represent reproduction events. The individuals are ordered from left to right in the following way. An individual $x$ is on the left of $y$ if either $x$ is ancestral to $y$, or when we trace back the lineages of $x$ and $y$ to their common ancestor, the ancestral lineage of $y$ belongs to the child. For example, in Figure \ref{Fig: the planar representation of a genealogical tree}, individual $6$ is on the left of $7$ because $6$ is an ancestor of $7$, and $7$ is on the left of $8$ because the as we trace back the ancestral lineage of $7$ (the red line) and $8$ (the blue line), the ancestral lineage of $8$ belongs to the child.
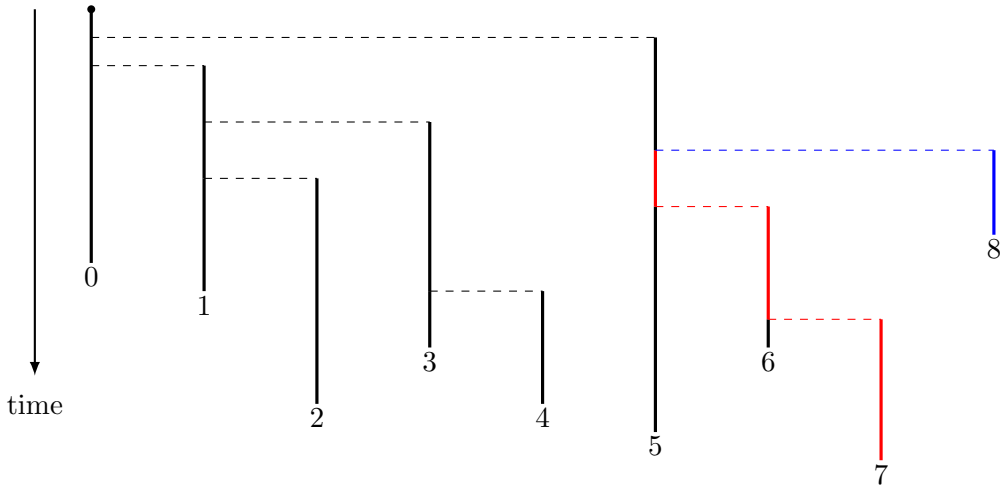
\begin{figure}[h]
\centering
\begin{tikzpicture}[scale=0.75]

\fill (0,6.5) circle(2pt);

\draw[thick, -latex](-1, 6.5)--(-1,0);
\node at (-1, -0.5){time};

\draw[very thick](0,2)--(0,6.5);
\draw[very thick](2,1.5)--(2,3.5);
\draw[very thick](2,3.5)--(2,5.5);
\draw[very thick](4,-0.5)--(4,3.5);
\draw[very thick](6,0.5)--(6,1.5);
\draw[very thick](6,1.5)--(6,4.5);
\draw[very thick](8,-0.5)--(8,1.5);
\draw[very thick](10,-1)--(10,3);
\draw[very thick, red](10,3)--(10,4);
\draw[very thick](10,4)--(10,6);
\draw[very thick](12,0.5)--(12,1);
\draw[very thick, red](12,1)--(12,3);
\draw[very thick, red](14,-1.5)--(14,1);
\draw[very thick, blue](16,2.5)--(16,4);

\node at (0, 1.75){$0$};
\node at (2, 1.25){$1$};
\node at (4, -0.75){$2$};
\node at (6, 0.25){$3$};
\node at (8, -0.75){$4$};
\node at (10, -1.25){$5$};
\node at (12, 0.25){$6$};
\node at (14, -1.75){$7$};
\node at (16, 2.25){$8$};

\draw[dashed] (0,5.5)--(2,5.5);
\draw[dashed] (2,3.5)--(4,3.5);
\draw[dashed] (2,4.5)--(6,4.5);
\draw[dashed] (6,1.5)--(8,1.5);
\draw[dashed] (0,6)--(10,6);
\draw[dashed, red] (10,3)--(12,3);
\draw[dashed, red] (12,1)--(14,1);
\draw[dashed, blue] (10,4)--(16,4);
%\draw[dashed] (0,6.5)--(18,6.5);
\end{tikzpicture}

\caption{The planar representation of a genealogical tree}
\label{Fig: the planar representation of a genealogical tree}
\end{figure}

The contour process of a genealogical tree is a c\`adl\`ag process which encodes the information of the tree. This process starts at the depth of the leftmost individual, keeps track of the distance from the root in the depth-first search of the genealogical tree, and is indexed by the total length of the searched branches. See Figure \ref{Fig: contour process} for an illustration. This process has slope $-1$ as we trace back the lineage in the genealogical tree unless we encounter a new birth event, in which case the contour process makes a jump in the size of the child's lifespan. 
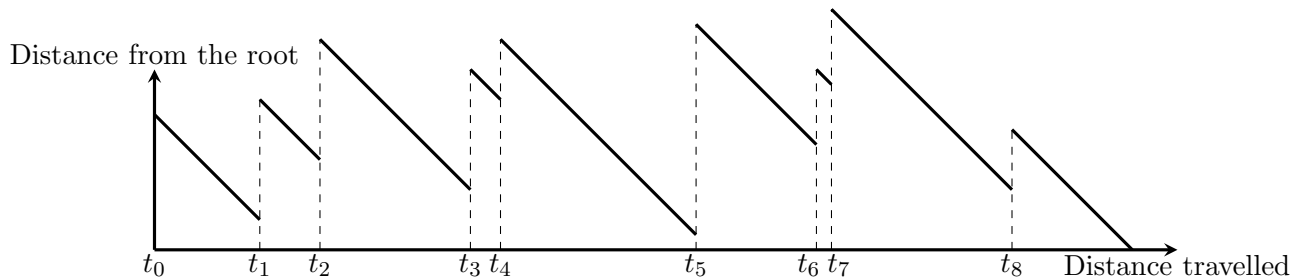
\begin{figure}[h]
\centering

\begin{tikzpicture}[scale=0.4]

\draw[very thick, -stealth](0,0)--(34,0);
\node at (34,-0.5){Distance travelled};

\draw[very thick, -stealth](0,0)--(0,6);
\node at (0,6.5){Distance from the root};

\draw[very thick](0,4.5)--(3.5,1);
\draw[dashed](3.5,0)--(3.5,5);
\draw[very thick](3.5,5)--(5.5,3);
\draw[dashed](5.5,0)--(5.5,7);
\draw[very thick](5.5,7)--(10.5, 2);
\draw[dashed](10.5,0)--(10.5,6);
\draw[very thick](10.5,6)--(11.5,5);
\draw[dashed](11.5,0)--(11.5, 7);
\draw[very thick](11.5,7)--(18,0.5);
\draw[dashed](18, 0)--(18,7.5);
\draw[very thick](18, 7.5)--(22,3.5);
\draw[dashed](22, 0)--(22,6);
\draw[very thick](22,6)--(22.5, 5.5);
\draw[dashed](22.5, 0)--(22.5,8);
\draw[very thick](22.5, 8)--(28.5,2);
\draw[dashed](28.5, 0)--(28.5,4);
\draw[very thick](28.5, 4)--(32.5,0);

\node at (0, -0.5){$t_0$};
\node at (3.5, -0.5){$t_1$};
\node at (5.5, -0.5){$t_2$};
\node at (10.5, -0.5){$t_3$};
\node at (11.5, -0.5){$t_4$};
\node at (18, -0.5){$t_5$};
\node at (21.75, -0.5){$t_6$};
\node at (22.75, -0.5){$t_7$};
\node at (28.5, -0.5){$t_8$};

% \draw[very thick](0,2)--(0,6.5);
% \draw[very thick](2,1.5)--(2,3.5);
% \draw[very thick](2,3.5)--(2,5.5);
% \draw[very thick](4,0)--(4,3.5);
% \draw[very thick](6,0.5)--(6,1.5);
% \draw[very thick](6,1.5)--(6,4.5);
% \draw[very thick](8,0)--(8,1.5);
% \draw[very thick](10,0)--(10,3);
% \draw[very thick](10,3)--(10,6);
% \draw[very thick](12,0.5)--(12,1);
% \draw[very thick](12,1)--(12,3);
% \draw[very thick](14,0)--(14,1);
% \draw[very thick](16,2.5)--(16,4);

% \draw[dashed] (0,5.5)--(2,5.5);
% \draw[dashed] (2,3.5)--(4,3.5);
% \draw[dashed] (2,4.5)--(6,4.5);
% \draw[dashed] (6,1.5)--(8,1.5);
% \draw[dashed] (0,6)--(10,6);
% \draw[dashed] (10,3)--(12,3);
% \draw[dashed] (12,1)--(14,1);
% \draw[dashed] (10,4)--(16,4);
%\draw[dashed] (0,6.5)--(18,6.5);
\end{tikzpicture}
\caption{The contour process of the genealogical tree in Figure \ref{Fig: the planar representation of a genealogical tree}. For $i=1,2,...,8$, $t_i$ is the total length of the branches searched before we reach the $i$th individual, and the size of the jump is the lifespan of the $i$th individual.}
\label{Fig: contour process}
\end{figure}

One can also study the tree truncated at time $T$ and its contour process. We denote the contour processes for the original and truncated trees by $X$ and $X^{(T)}$, respectively. Lambert \cite{lambert2010contour} showed that $X$ (resp. $X^{(T)}$) has the same distribution as $Y$ (resp. $Y^{(T)}$) stopped at 0 defined as follows. Let $\{\xi_i\}_{i=0}^{\infty}$ be independent exponentially distributed random variables with mean $1/\mu$ representing the lifespan of individuals, and let $N$ be a Poisson process with rate $\lambda$ counting the number of reproduction events found in the depth-first search. We define Lévy processes $Y$ and $Y^{(T)}$ by
\begin{equation*}
    Y_t = \xi_0-t+\sum_{i=0}^ {N(t)} \xi_{i},\qquad
    Y^{(T)}_t = \min\{\xi_0, T\}-t+\sum_{i=1}^ {N(t)} \min\{T-Y^{(T)}_{t-}, \xi_{i}\}.
\end{equation*}
That is, the process $Y$ starts from the lifespan of the $0$th individual, has drift $-1$ as we trace back the lineage, and makes jumps when we encounter a new birth event. The process $Y^{(T)}$ has the same dynamics except that the jumps are truncated so that $Y^{(T)}$ does not go above $T$.

\section{Mutations in a sample from a population}\label{Section: mutations in a sample from a population}
In most applications, we only have access to the genetic information in a sample of the population at time $T$. Therefore, we are interested in the coalescent tree that describes the genealogy of the sampled individuals. See Figure \ref{Fig: the coalescent tree} for an illustration. The mutations in the sample can be classified into two groups depending on whether they are associated with branching events in the coalescent tree. Those associated with such branching points are marked red in Figure \ref{Fig: the coalescent tree}. The others are marked blue.
\begin{figure}[h]
\centering
\begin{tikzpicture}[scale=0.75]

\draw[dashed](0,0)--(18,0);
\node at (-0.5,0) {$T$};

\draw[very thick](0,2)--(0,5.5);
\draw[very thick, red](0,5.5)--(0,6.5);
\draw[very thick](2,1.5)--(2,3.5);
\draw[very thick,red](2,3.5)--(2,5.5);
\draw[very thick, red](4,0)--(4,3.5);
\draw[very thick](4,-0.5)--(4,0);
\draw[very thick](6,0.5)--(6,4.5);
\draw[very thick](8,-1)--(8,1.5);
\draw[very thick](10,-1)--(10,0);
\draw[very thick, red](10,0)--(10,6);
\draw[very thick](12,0.5)--(12,1);
\draw[very thick, red](12,1)--(12,3);
\draw[very thick](14,-1.5)--(14,0);
\draw[very thick, red](14,0)--(14,1);
\draw[very thick](16,2.5)--(16,4);

\draw[dashed, red] (0,5.5)--(2,5.5);
\draw[dashed, red] (2,3.5)--(4,3.5);
\draw[dashed] (2,4.5)--(6,4.5);
\draw[dashed] (6,1.5)--(8,1.5);
\draw[dashed, red] (0,6)--(10,6);
\draw[dashed, red] (10,3)--(12,3);
\draw[dashed, red] (12,1)--(14,1);
\draw[dashed] (10,4)--(16,4);

\node at (0, 1.75){$0$};
\node at (2, 1.25){$1$};
\node at (4, -0.75){$2$};
\node at (6, 0.25){$3$};
\node at (8, -1.25){$4$};
\node at (10, -1.25){$5$};
\node at (12, 0.25){$6$};
\node at (14, -1.75){$7$};
\node at (16, 2.25){$8$};

\fill [blue](2,5.5) circle(3pt); 
\fill [blue](4,3.5) circle(3pt); 
\fill [red] (10,6) circle(3pt);
\fill [red] (12,3) circle(3pt);
\fill [blue] (14,1) circle(3pt);

\end{tikzpicture}

\caption{The coalescent tree (in red) where individuals 2, 5, and 7 are sampled. This tree is obtained by tracing back the lineages of the sampled individuals from time $T$.}
\label{Fig: the coalescent tree}
\end{figure}
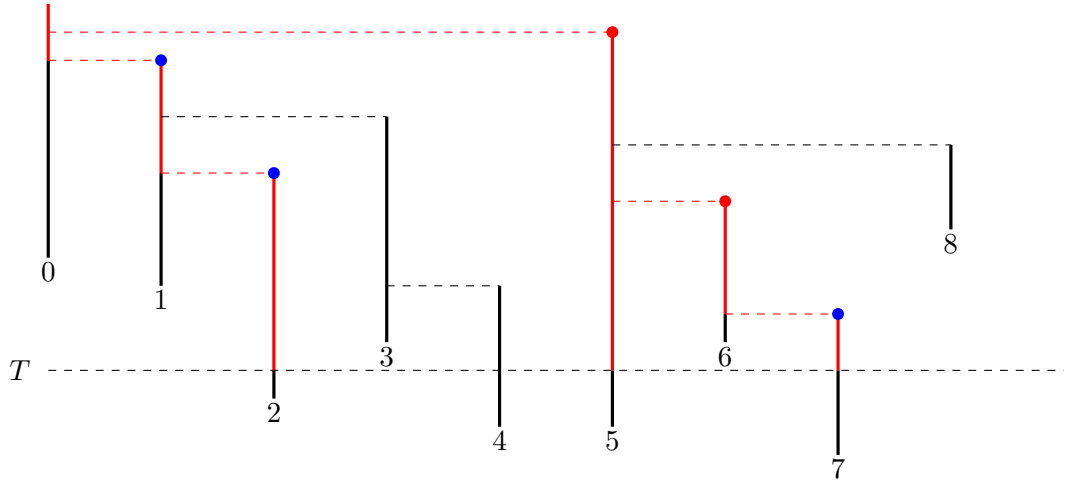

Two sampling schemes have been studied extensively in the literature: sampling each individual independently with probability $y$ and taking a uniform sample of size $n$. We refer to them as Bernoulli$(y)$ sampling and uniform$(n)$ sampling, respectively. These two sampling schemes were connected by Lambert \cite{lambert2018coalescent}. We will discuss the mutations in the Bernoulli$(y)$ sampling in Section \ref{Subsection: Bernoulli(y) sampling} and the uniform$(n)$ sampling in Section \ref{Subsection: uniform$(n)$ sampling}.

In the rest of the paper, we will consider a variant of the model where the parent acquires the blue mutations. By the Markov property, whenever there is a birth event, the subtree generated by the child and its descendants has the same distribution as the subtree generated by the parent and the descendants that it has after the original birth event. Therefore, the distribution of the coalescent tree and the mutations are the same whether we assign the mutations to the parent or the child. For an illustration, one can compare Figures \ref{Fig: the coalescent tree} and \ref{Fig: the coalescent tree with the parent getting the mutations}.

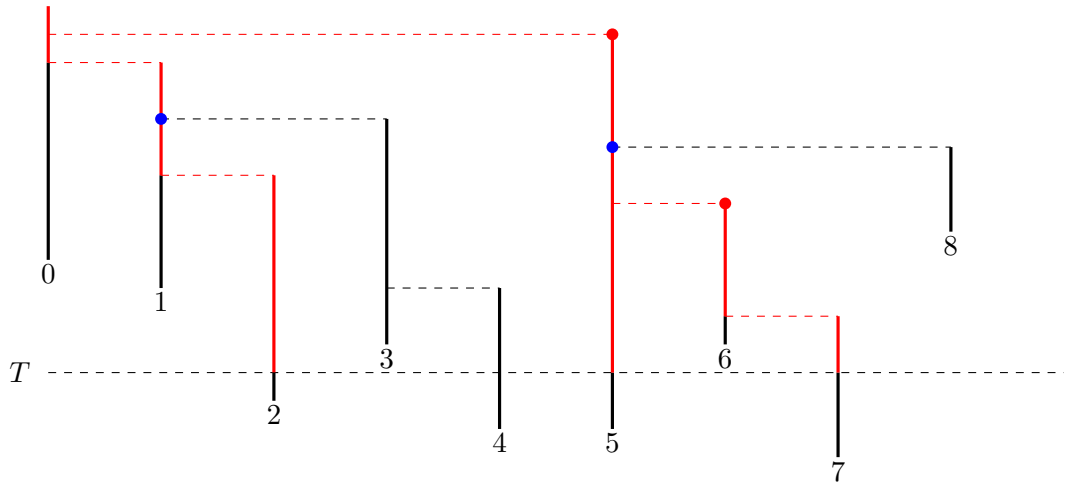
\begin{figure}[h]
\centering
\begin{tikzpicture}[scale=0.75]

\draw[dashed](0,0)--(18,0);
\node at (-0.5,0) {$T$};

\draw[very thick](0,2)--(0,5.5);
\draw[very thick, red](0,5.5)--(0,6.5);
\draw[very thick](2,1.5)--(2,3.5);
\draw[very thick,red](2,3.5)--(2,5.5);
\draw[very thick, red](4,0)--(4,3.5);
\draw[very thick](4,-0.5)--(4,0);
\draw[very thick](6,0.5)--(6,4.5);
\draw[very thick](8,-1)--(8,1.5);
\draw[very thick](10,-1)--(10,0);
\draw[very thick, red](10,0)--(10,6);
\draw[very thick](12,0.5)--(12,1);
\draw[very thick, red](12,1)--(12,3);
\draw[very thick](14,-1.5)--(14,0);
\draw[very thick, red](14,0)--(14,1);
\draw[very thick](16,2.5)--(16,4);

\draw[dashed, red] (0,5.5)--(2,5.5);
\draw[dashed, red] (2,3.5)--(4,3.5);
\draw[dashed] (2,4.5)--(6,4.5);
\draw[dashed] (6,1.5)--(8,1.5);
\draw[dashed, red] (0,6)--(10,6);
\draw[dashed, red] (10,3)--(12,3);
\draw[dashed, red] (12,1)--(14,1);
\draw[dashed] (10,4)--(16,4);

\node at (0, 1.75){$0$};
\node at (2, 1.25){$1$};
\node at (4, -0.75){$2$};
\node at (6, 0.25){$3$};
\node at (8, -1.25){$4$};
\node at (10, -1.25){$5$};
\node at (12, 0.25){$6$};
\node at (14, -1.75){$7$};
\node at (16, 2.25){$8$};

\fill [red] (10,6) circle(3pt); 
%\fill [blue](0,5.5) circle(3pt);
\fill [blue](2,4.5) circle(3pt);
%\fill [blue](2,3.5) circle(3pt); 
\fill [blue] (10,4) circle(3pt);
\fill [red] (12,3) circle(3pt);
%\fill [blue] (12,1) circle(3pt);

\end{tikzpicture}

\caption{The same coalescent tree as in Figure \ref{Fig: the coalescent tree}, except that the parent mutates when there is a birth event that is not at one of the branchpoints in the coalescent tree.}
\label{Fig: the coalescent tree with the parent getting the mutations}
\end{figure}

\subsection{Bernoulli$(y)$ sampling}\label{Subsection: Bernoulli(y) sampling}
In this section, we describe the coalescent tree, together with the red and blue mutations in the Bernoulli sampling scheme. Lambert and Stadler  \cite{lambert2013birth} obtained the construction of the subtree. We outline the construction here for readers' convenience. Recall that the contour process $X^{(T)}$ makes jumps at rate $\lambda$. That is, birth events occur at rate $\lambda$ as we trace back the lineage. Consider a birth event occurring $t$ time units before the sampling time. Recalling the definition of $N_t$ from the introduction, the probability that this birth event does not give rise to a new leaf in the coalescent tree is
\begin{equation}\label{Mutation in a sample from a population, Bernoulli(y) sampling, def of q(y,t)}
q(y,t)=\mathbb{P}\left(N_{t}=0\right)+\sum_{k=1}^\infty \mathbb{P}\left(N_{t}=k\right)(1-y)^k=\E\left[(1-y)^{N_t}\right]=F_t(1-y).   
\end{equation} 
Therefore, red birth events occur with rate $\lambda(1-q(y,t))$, and blue birth events occur with rate $\lambda q(y,t)$, independently of each other. For a coupling argument in Section \ref{Subsection: error bounds}, red and blue birth events are represented using a Poisson point process. To be more precise, for any positive continuous functions $L\le U$ on $[a,b]$, we let $\mathcal{A}(L,U, [a,b])=\{(t,x)\in\R^2: a\le t\le b, L(t)\le x\le U(t)\}$ be the area between $L$ and $U$ on $[a,b]$. In the special case where $L\equiv 0$ on $[a,b]$, we abbreviate the notation as $\mathcal{A}(U, [a,b])$. We also let $\{\Pi_{i}\}_{i\ge 0}$ be independent Poisson point process on $\R^2$ with unit intensity. Then, for the Bernoulli$(y)$ sampling, one can obtain the coalescent tree and the mutations in the following way:
\begin{enumerate}
\item We get an empty sample with probability $\E[(1-y)^{N_T}]=F_T(1-y)$. If the sample is non-empty, then we have an initial branch $H_{0,y, T}$ of length $T$.
\item Let $\{\pi_{i,y}\}_{i\ge0}$ be independent Poisson processes with rate $\lambda (1-q(y,t))$ on $[0,T]$ defined by
$\pi_{i,y}(t)=\Pi_i(\mathcal{A}(\lambda q(y,\cdot), \lambda, [0,t]))$ for $0\le t\le T$.
\item Having constructed $H_{0,y,T},\dots, H_{i,y,T}$, if $\pi_{i,y}(T)>0$, then the $i$th branch length is the time of the first jump in $\pi_{i,y}$, i.e. $H_{i,y, T}=\inf\{t\ge 0: \pi_{i,y}(t)>0\}$. Otherwise, we have found all the individuals in the sample. Note that $H_{i,y, T}$ is also the time for the red reproduction event on the $i$th individual.
\item Blue birth events occur along the branches at rate $\lambda q(y,t)$. That is, the number of mutations along the $i$th branch up to time $t$ is $\Pi_i(\mathcal{A}(\lambda q(y,\cdot), [0,t]))$.
\end{enumerate}
\begin{remark}\label{Remark: population size has a geometric distribution}
This construction implies that conditional on a non-empty sample, the sample size has a geometric distribution. In particular, by taking $y=1$, the population size $N_T$ satisfies $\P(N_T\ge k+1|N_T\ge k)=\P(\pi_{i,1}(T)>0)$ for $k\ge 1$.
\end{remark}
% \begin{remark}\label{Remark: R_i,y,T}
% With this construction, the number of reproduction events with at least two descendants in the sample on the $i$th branch is 
% $$
% R^{\ge 2}_{0,y,T} = \Pi_0(\mathcal{A}(\lambda q(y,\cdot),[H_{1,y,T},T])),
% $$
% and
% $$
% R^{\ge 2}_{i,y,T} = \Pi_i(\mathcal{A}(\lambda q(y,\cdot),[H_{i+1,y,T},H_{i,y,T}]))+\mathbbm{1}_{\{H_{i+1,y,T}\le H_{i,y,T}\}},\qquad\text{ for }i\ge 1,
% $$
% where the interval $[H_{i+1,y,T},H_{i+1,y,T}]$ is empty if $H_{i+1, y, T}>H_{i,y,T}$. The total number of reproduction events with at least two descendants in the sample is
% $$
% R^{\ge 2}_{y,T}=\sum_{i\ge 0} R^{\ge 2}_{i,y,T}.
% $$
% \end{remark}

We compute $q(y,t)=F_t(1-y)$ here and refer the reader to equation (7) of \cite{lambert2018coalescent} for the formula for the density of $H_{i,y, T}$. We define 
% \begin{equation*}\label{Mutations in a sample from a population, uniform(n) sample, def of delta_y,t}
% \delta_{y,t}=\P(\pi_{i,y}(t)=0)=\exp\left(-\int_0^t \lambda(1- q(y,s))\ ds\right).
% \end{equation*}
% In particular, if $y=1$,we write $\delta_t$ for $\delta_{1,t}$ and we have
\begin{equation}\label{Mutation in a sample from a population, Bernoulli(y) sampling, def of delta_t}
\delta_t=\P(\pi_{i,1}(t)=0)=\exp\left(-\int_0^t \lambda \P(N_s>0) ds \right).
\end{equation}
By Remark \ref{Remark: population size has a geometric distribution}, conditional on $N_t>0$, $N_t$ has a geometric distribution with 
$$
\P(N_t\ge k+1|N_t\ge k)=\P(\pi_{i,1}(t)>0)=1-\delta_t,\qquad \text{ for } k\ge 1.
$$ 
Therefore, we have
\begin{equation}\label{Mutations in a sample from a population, Bernoulli(y) sampling, expected population}
e^{rt}=\E[N_t] = \P(N_t>0)/\delta_t.   
\end{equation}
Combining \eqref{Mutation in a sample from a population, Bernoulli(y) sampling, def of delta_t} and \eqref{Mutations in a sample from a population, Bernoulli(y) sampling, expected population}, we get
$$
\delta_t = \exp\left(-\int_0^t \lambda e^{rs}\delta_s ds \right).
$$
Solving this equation for $\delta_t$ and using the initial condition $\delta_{0}=1$, we get the following formula for $\delta_t$.
$$
\delta_{t} = \frac{r}{\lambda e^{rt}-\mu},
$$
which agrees with the formula of $\delta_t$ in \eqref{Mutations in a sample from a population, uniform(n) sample, density for Y}. This formula is also found in equation (4) of \cite{lambert2018coalescent}.
To compute $q(y,t)$, by \eqref{Mutations in a sample from a population, Bernoulli(y) sampling, expected population} and Remark \ref{Remark: population size has a geometric distribution} again, we have
\begin{align}\label{Mutation in a sample from a population, Bernoulli(y) sampling, values of q(y,t) and delta_y,t}
q(y,t)&=F_t(1-y)\nonumber\\ 
&=\P(N_t=0)+\P(N_t\ge1)\E\left[(1-y)^{N_t}|N_t\ge1\right]\nonumber\\
&=1-\delta_t e^{rt}+ \delta_t e^{rt}\cdot \frac{\delta_{t}(1-y)}{\delta_{t}+y-y\delta_{t} }.
\end{align}
% Some elementary computation gives 
% \begin{equation}\label{Mutation in a sample from a population, Bernoulli(y) sampling, values of q(y,t) and delta_y,t}
% q(y,t)=F_t(1-y)=1-\delta_{t} e^{rt}+\frac{\delta_{t}}{\delta_{t}+y-y\delta_{t} }\cdot \delta_{t} e^{rt}, \qquad \delta_{t}=\frac{r}{y\lambda e^{rT}+r-y\lambda}.
% \end{equation}

\subsection{Uniform$(n)$ sampling}\label{Subsection: uniform$(n)$ sampling}
Lambert \cite{lambert2018coalescent} showed that the uniform$(n)$ sampling can be realized by taking $y$ from the density \eqref{Mutations in a sample from a population, uniform(n) sample, density for Y}
and then performing the Bernoulli($y$) sampling conditioned to have size $n$. Therefore, for the uniform$(n)$ sampling, one can obtain the coalescent tree and the mutations in the following way:
\begin{enumerate}
\item Take the sampling probability $Y_{n,T}$ from the density \eqref{Mutations in a sample from a population, uniform(n) sample, density for Y}.
\item Conditional on $Y_{n,T}=y$, for $i=1,2,\dots, n-1$, let $\pi_{i,y}$ be the Poisson process be defined in Section \ref{Subsection: Bernoulli(y) sampling}, conditional on $\pi_{i,y}(T)>0$. 
\item For $i=1,2,\dots, n-1$, the $i$th branch length $H_{i,n,T}$ is the time of the first jump in $\pi_{i,y}$, conditional on $\pi_{i,y}(T)>0$. That is,
$$
\P(H_{i,n,T}>t|Y_{n,T}=y)=\P(\pi_{i,y}(t)=0|\pi_{i,y}(T)>0).
$$
The density for $H_{i,n, T}$ is given by \eqref{Introduction, density of H_i,n,T conditional on Y_n,T}.
\item Blue birth events occur along the branches at rate $\lambda q(y,t)$.
\end{enumerate}

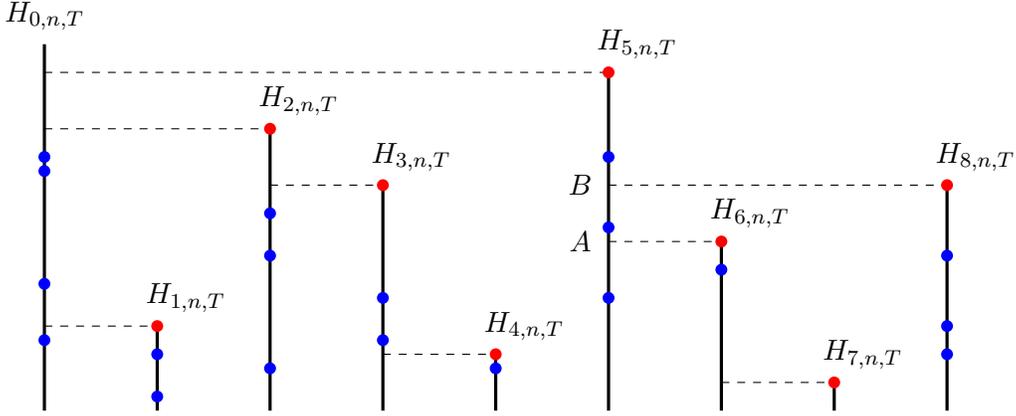
\begin{figure}[h]
\centering
\begin{tikzpicture}[scale=0.75]
%\draw(0,0)--(18,0);

% \node at (0,-0.5){0};
% \node at (2,-0.5){1};
% \node at (4,-0.5){2};
% \node at (6,-0.5){3};
% \node at (8,-0.5){4};
% \node at (10,-0.5){5};
% \node at (12,-0.5){6};
% \node at (14,-0.5){7};
% \node at (16,-0.5){8};
% \node at (-0.5,6.5){$T$};
% \fill (0,6.5) circle(1.5pt);

\draw[very thick](0,0)--(0,6.5);
\node at (0,7){$H_{0,n,T}$};
\draw[very thick](2,0)--(2,1.5);
\node at (2.5,2){$H_{1,n,T}$};
\draw[very thick](4,0)--(4,5);
\node at (4.5,5.5){$H_{2,n,T}$};
\draw[very thick](6,0)--(6,4);
\node at (6.5,4.5){$H_{3,n,T}$};
\draw[very thick](8,0)--(8,1);
\node at (8.5,1.5){$H_{4,n,T}$};
\draw[very thick](10,0)--(10,6);
\node at (10.5,6.5){$H_{5,n,T}$};
\draw[very thick](12,0)--(12,3);
\node at (12.5,3.5){$H_{6,n,T}$};
\draw[very thick](14,0)--(14,0.5);
\node at (14.5,1){$H_{7,n,T}$};
\draw[very thick](16,0)--(16,4);
\node at (16.5,4.5){$H_{8,n,T}$};

\draw[dashed] (0,1.5)--(2,1.5);
\draw[dashed] (0,5)--(4,5);
\draw[dashed] (4,4)--(6,4);
\draw[dashed] (6,1)--(8,1);
\draw[dashed] (0,6)--(10,6);
\draw[dashed] (10,3)--(12,3);
\draw[dashed] (12,0.5)--(14,0.5);
\draw[dashed] (10,4)--(16,4);

\fill [red] (2,1.5) circle(3pt);
\fill [red] (4,5) circle(3pt);
\fill [red] (6,4) circle(3pt);
\fill [red] (8,1) circle(3pt);
\fill [red] (10,6) circle(3pt);
\fill [red] (12,3) circle(3pt);
\fill [red] (14,0.5) circle(3pt);
\fill [red] (16,4) circle(3pt);

\fill [blue] (0,1.25) circle(3pt);
\fill [blue] (0,2.25) circle(3pt);
\fill [blue] (0,4.25) circle(3pt);
\fill [blue] (0,4.5) circle(3pt);
\fill [blue] (2,0.25) circle(3pt);
\fill [blue] (2,1) circle(3pt);
\fill [blue] (4,0.75) circle(3pt);
\fill [blue] (4,2.75) circle(3pt);
\fill [blue] (4,3.5) circle(3pt);
\fill [blue] (6,1.25) circle(3pt);
\fill [blue] (6,2) circle(3pt);
\fill [blue] (8,0.75) circle(3pt);
\fill [blue] (10,2) circle(3pt);
\fill [blue] (10,3.25) circle(3pt);
\fill [blue] (10,4.5) circle(3pt);
\fill [blue] (12,2.5) circle(3pt);
\fill [blue] (16,1) circle(3pt);
\fill [blue] (16,1.5) circle(3pt);
\fill [blue] (16,2.75) circle(3pt);

\node at (9.5,3){$A$};
\node at (9.5,4){$B$};
\end{tikzpicture}
\caption{A coalescent tree with sample size $n=9$. The blue mutation on the segment $AB$ supports three individuals because $\max\{H_{6,n,T}, H_{7,n,T}\}\le \min\{H_{5,n,T}, H_{8,n,T}\}$, the red mutation associated with $H_{3,n,T}$ supports two individuals because $H_{4,n,T}<H_{3,n,T}<H_{5,n,T}$.}
\label{Fig: counting red and blue mutations}
\end{figure}

\begin{remark}\label{Remark: def of R^k_i,n,T}
For $i=0,1,\dots, n-1$, we define $R^{k}_{i,n, T}$ (resp. $R^{\ge 2}_{i,n, T}$) to be the number of reproduction events along the $i$th branch that support\ $k$ (resp. at least two) leaves in the sample. Then for $k\ge 2$, we have 
$$
R^{k}_{0,n,T} = \Pi_0\left(\mathcal{A}\left(\lambda q(Y_{n,T},\cdot),\left[\max_{1\le j\le k-1}H_{j,n,T}, H_{k,n,T}\right]\right)\right),
$$
and for $1\le i\le n-k-1$,
\begin{align*}
R^{k}_{i,n,T} &= \Pi_i\left(\mathcal{A}\left(\lambda q(Y_{n,T},\cdot),\left[\max_{i+1\le j\le i+k-1} H_{j,n,T},H_{i,n,T}\wedge H_{i+k,n,T}\right]\right)\right)\\
&\qquad+\mathbbm{1}_{\{\max_{i+1\le j\le i+k-1} H_{j,n,T}\le H_{i,n,T}\le H_{i+k,n,T}\}},
\end{align*}
and 
\begin{align*}
R^{k}_{n-k,n,T} &= \Pi_{n-k}\left(\mathcal{A}\left(\lambda q(Y_{n,T},\cdot),\left[\max_{n-k+1\le j\le n-1} H_{j,n,T},H_{n-k,n,T}\right]\right)\right)\\
&\qquad+\mathbbm{1}_{\{\max_{n-k+1\le j\le n-1}H_{j,n,T}\le H_{n-k,n,T}\}}.
\end{align*}
The first part in the definition of $R^{k}_{i,n, T}$ or $R^{k}_{n-k,n, T}$ counts the number of blue reproduction events with $k$ descendants in the sample on the $i$th branch. The second part counts the number of red reproduction events that have $k$ descendants in the sample. See Figure \ref{Fig: counting red and blue mutations} for an illustration. Note that there is no red mutation on the $0$th branch. 
The total number of reproduction events with $k$ descendants in the sample is 
$$
R^{k}_{n,T}=\sum_{i= 0}^{n-k} R^{k}_{i,n,T}.
$$ 
Likewise, the number of reproduction events with at least two descendants in the sample on the $0$th branch is
$$
R^{\ge 2}_{0,n,T} = \Pi_0(\mathcal{A}(\lambda q(Y_{n,T},\cdot),[H_{1,n,T},T])),
$$
and for  $1\le i\le n-2$,
$$
R^{\ge 2}_{i,n,T} = \Pi_i(\mathcal{A}(\lambda q(Y_{n,T},\cdot),[H_{i+1,n,T},H_{i,n,T}]))+\mathbbm{1}_{\{H_{i+1,n,T}\le H_{i,n,T}\}}.
$$
The total number of reproduction events with at least two descendants in the sample is 
$$
R^{\ge 2}_{n,T}=\sum_{i= 0}^{n-2} R^{\ge 2}_{i,n,T}.
$$  
\end{remark}

\section{Asymptotics for large $n$ and $T$}\label{Section: asymptotics for large $n$ and $T$}
\subsection{Approximation for the sampling probability and branch length}\label{Subsection: approximation for the sampling probability and branch length}
Approximations for the sampling probability $Y_{n,T}$ and the branch lengths $\{H_{i,n,T}\}_{i=1}^{n-1}$ when $n$ and $T$ are large are obtained in \cite{johnson2023clonerate}.
\begin{enumerate}
\item Choose $W$ from the exponential density
$$f_{W}(w)=e^{-w},\qquad w\in(0,\infty).$$
\item Choose $U_{i}$ for $i=1,\dots, n-1$ independently from the logistic distribution, with density
$$
f_{U_{i}}(u)=\frac{e^u}{\left(1+e^u\right)^{2}}, \quad u \in(-\infty, \infty).
$$
\item Let $Y = n\delta_T/W$.
\item Let ${H}_{i}=T-(\log n+\log(1/W)+U_i)/r$ for $i=1,\dots, n-1$.
\end{enumerate}
Note that the $H_i$'s (hence the quantities defined below) depend on $n$, although this dependence is not explicit in the notation. Using this approximation, we can, therefore, define an approximation for the number of reproduction events with at least two descendants in the sample on the $i$th branch as 
$$
R^{\ge 2}_{0} = \Pi_0(\mathcal{A}(\lambda q(n\delta_T/ W,\cdot),[H_{1},T])),
$$
and
$$
R^{\ge 2}_{i} = \Pi_i(\mathcal{A}(\lambda q(n\delta_T/ W,\cdot),[H_{i+1},H_{i}]))+\mathbbm{1}_{\{H_{i+1}\le H_{i}\}},\qquad\text{ for } 1\le i\le n-2.
$$
An approximation for the total number of reproduction events with at least two descendants in the sample, excluding the $0$th branch, is 
$$
R^{\ge 2}=\sum_{i= 1}^{n-2} R^{\ge 2}_{i}.
$$   
Similarly, for $k\ge 2$, we define
$$
R^{k}_{0} = \Pi_0\left(\mathcal{A}\left(\lambda q(n\delta_T/ W,\cdot),\left[\max_{1\le j\le k-1}H_{j}, H_{k}\right]\right)\right),
$$
and for $1\le i\le n-k-1$
\begin{align*}
R^{k}_{i} = \Pi_i\left(\mathcal{A}\left(\lambda q(n\delta_T/ W,\cdot),\left[\max_{i+1\le j\le i+k-1} H_{j}, H_{i}\wedge H_{i+k}\right]\right)\right)+\mathbbm{1}_{\{\max_{i+1\le j\le i+k-1} H_{j}\le H_{i}\le H_{i+k}\}},
\end{align*}
and 
\begin{align*}
R^{k}_{n-k} = \Pi_i\left(\mathcal{A}\left(\lambda q(n\delta_T/ W,\cdot),\left[\max_{n-k-1\le j\le n-1} H_j,H_{n-k}\right]\right)\right)+\mathbbm{1}_{\{\max_{n-k-1\le j\le n-1} H_j\le H_{n-k}\}},
\end{align*}
An approximation for the total number of reproduction events inherited by $k$ individuals, excluding the $0$th and the $(n-k)th$ branches, is 
$$
R^{k}=\sum_{i= 1}^{n-k-1} R^{k}_{i}.
$$

\subsection{Error bounds}\label{Subsection: error bounds}
In the rest part of the paper, the sampling time $T=T_n$ depends on $n$. The errors in the approximation resulting from taking the limit as $n\rightarrow\infty$ are bounded by Lemmas 4 and 5 in the 
supplementary material of \cite{johnson2023clonerate}. 
% The $Q_{n, T}$, $Q_{n,\infty}$ and $\widetilde{Q}_{n,\infty}$ are defined in Section 2.1 in the supplementary and should be thought as a rescaled 
% $Y_{n,T}$. The $U_{i,n,T}$ is an affine transformation of 
% $H_{i,n,T}$ defined also in Section 2.1, approximated by $U_{i,n,\infty}$ and $U_{i,\infty, \infty}$. 
We combine these lemmas into Lemma \ref{Lemma: error in the approximation of Y and H}. The event $A_n$ in Lemma \ref{Lemma: error in the approximation of Y and H} is the intersection of the events $Q_{n,T_n}=Q_{n,\infty}\le h(n):=n^{1/2}e^{rT_n/2}$ and $\widetilde{Q}_{n,\infty}=1/W$ in \cite{johnson2023clonerate}. The event $B_n$ is the intersection of  $Q_{n,T_n}=Q_{n,\infty}\le h(n):=n^{1/2}(\log n)^{-1/3}e^{r_n T_n/3}$ and $\widetilde{Q}_{n,\infty}=1/W$ in \cite{johnson2023clonerate}. The choice of $h(n)$ comes from the proof of Lemma 6 of \cite{johnson2023clonerate}.
\begin{lemma}\label{Lemma: error in the approximation of Y and H}
Assume condition \eqref{Introduction: sequence condition, LLN} holds. The random variables $Y_{n,T_n}$, $W$, $H_{i,n,T_n}$ and $H_i$ can be coupled so that the following hold.
\begin{enumerate}
\item There is a sequence of events $A_n$ on which $Y_{n,T_n}=n\delta_{T_n}/W$ and $\P(A_n)\rightarrow1$ as $n\rightarrow\infty$.
\item As $n\rightarrow\infty$,
$$
\frac{r}{n}\sum_{i=1}^{n-1}\E[|H_{i,n,T_n}-H_i|\mathbbm{1}_{A_n}]\rightarrow0.
$$
\end{enumerate}
Assume condition \eqref{Introduction: sequence condition} holds. The random variables $H_{i,n, T_n}$ and $H_i$ can be coupled so that the following hold.
\begin{enumerate}
\item There is a sequence of events $B_n$ on which $Y_{n,T_n}=n\delta_{T_n}/W$ and $\P(B_n)\rightarrow1$ as $n\rightarrow\infty$.
\item As $n\rightarrow\infty$,
$$
\frac{r}{\sqrt{n}}\sum_{i=1}^{n-1}\E[|H_{i,n,T_n}-H_i|\mathbbm{1}_{B_n}]\rightarrow0.
$$
\end{enumerate}
\end{lemma}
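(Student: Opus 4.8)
The plan is to deduce Lemma~\ref{Lemma: error in the approximation of Y and H} from Lemmas 4 and 5 in the supplementary material of \cite{johnson2023clonerate}, the only genuine work being to recall the coupling, to match notation, and to verify the hypotheses of those lemmas under conditions \eqref{Introduction: sequence condition, LLN} and \eqref{Introduction: sequence condition}. I would first recall that \cite{johnson2023clonerate} constructs $Y_{n,T_n}$ and $\{H_{i,n,T_n}\}_{i=1}^{n-1}$ together with the approximating variables $W$ and $\{H_i\}_{i=1}^{n-1}$ on one probability space, driven by a common family of i.i.d.\ variables, and that the auxiliary quantities $Q_{n,T_n}$, $Q_{n,\infty}$, and $\widetilde Q_{n,\infty}$ arise in generating the sampling probability. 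On $\{Q_{n,T_n}=Q_{n,\infty}\}$ the truncation at the horizon $T_n$ does not affect the generation of the sampling probability, and on $\{\widetilde Q_{n,\infty}=1/W\}$ the variable $W$ is read off from the same randomness; the intersection of these with $\{Q_{n,\infty}\le h(n)\}$ defines $A_n$ (resp.\ $B_n$), and on it the exact identity $Y_{n,T_n}=n\delta_{T_n}/W$ holds. This supplies the first conclusion in each part once the probability of the complementary event is shown to vanish.

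Next I would bound $\P(A_n^c)$ and $\P(B_n^c)$. Since $\widetilde Q_{n,\infty}=1/W$ with $W$ exponential, $\widetilde Q_{n,\infty}$ has the heavy tail $\P(1/W>x)=1-e^{-1/x}\sim 1/x$, and $Q_{n,\infty}$ is of order $n\widetilde Q_{n,\infty}=n/W$, so the tail estimate of \cite{johnson2023clonerate} gives $\P(Q_{n,\infty}>h(n))\lesssim n/h(n)$. With $h(n)=n^{1/2}e^{rT_n/2}$ this is $\lesssim (ne^{-rT_n})^{1/2}\to0$ under \eqref{Introduction: sequence condition, LLN}, while with $h(n)=n^{1/2}(\log n)^{-1/3}e^{rT_n/3}$ it is $\lesssim \big(n^{3/2}(\log n)e^{-rT_n}\big)^{1/3}\to0$ under \eqref{Introduction: sequence condition}; thus the two thresholds recover exactly the two hypotheses. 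The remaining truncation discrepancy $\P(Q_{n,T_n}\ne Q_{n,\infty})$ is controlled by the same argument, since on $\{Q_{n,\infty}\le h(n)\}$ the depth-first search stays below the horizon $T_n$.

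I would then import the branch-length estimate of Lemma 5 of \cite{johnson2023clonerate}, which bounds $\E[|H_{i,n,T_n}-H_i|\mathbbm{1}_{A_n}]$ (resp.\ with $B_n$) on the coupling event in terms of $n$, $T_n$, and $h(n)$. Summing over $1\le i\le n-1$ and multiplying by $r/n$ (resp.\ $r/\sqrt n$), the two hypotheses make the sums vanish. The factor $r/\sqrt n$ in the second part is larger than $r/n$ by $\sqrt n$, so the per-branch bound must be correspondingly tighter; this is achieved by working on the smaller event $B_n$ (smaller $h(n)$, hence smaller $Q_{n,\infty}$ and smaller coupling error), which in turn is why the stronger hypothesis \eqref{Introduction: sequence condition} is needed to keep $\P(B_n)\to1$.

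The main obstacle is the tension in the choice of $h(n)$ that this last point exposes: enlarging $h(n)$ makes $\P(Q_{n,\infty}>h(n))$ smaller (helping the probability bound) but enlarges the event on which the $L^1$ branch-length error is measured (hurting the length bound), and these two requirements pull in opposite directions. The content of the lemma is precisely that, for the two scalings $r/n$ and $r/\sqrt n$, the thresholds $h(n)=n^{1/2}e^{rT_n/2}$ and $h(n)=n^{1/2}(\log n)^{-1/3}e^{rT_n/3}$ strike this balance, and the delicate part of the argument in \cite{johnson2023clonerate} is the simultaneous control of the truncation effect and of the logistic approximation of the $H_i$ that makes both requirements hold at these thresholds.
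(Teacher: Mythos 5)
Your proposal is correct and takes essentially the same route as the paper: the paper gives no independent proof of Lemma~\ref{Lemma: error in the approximation of Y and H}, but obtains it exactly as you do, by combining Lemmas 4 and 5 of the supplementary material of \cite{johnson2023clonerate} under the coupling constructed there, with $A_n$ (resp.\ $B_n$) defined as the intersection of $\{Q_{n,T_n}=Q_{n,\infty}\le h(n)\}$ and $\{\widetilde{Q}_{n,\infty}=1/W\}$ for $h(n)=n^{1/2}e^{rT_n/2}$ (resp.\ $h(n)=n^{1/2}(\log n)^{-1/3}e^{rT_n/3}$). Your arithmetic matching the two thresholds to conditions \eqref{Introduction: sequence condition, LLN} and \eqref{Introduction: sequence condition}, namely $n/h(n)=(ne^{-rT_n})^{1/2}$ and $n/h(n)=(n^{3/2}(\log n)e^{-rT_n})^{1/3}$, is consistent with the paper's stated choice of $h(n)$, which it attributes to the proof of Lemma 6 of \cite{johnson2023clonerate}.
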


These error bounds allow us to bound the error in the approximations for $R^{\ge 2}_{n, T_n}$ and $R^{k}_{n, T_n}$, with the aid of another technical lemma.
\begin{lemma}\label{Lemma: ordering of coupling}
Let $X_1,X_2,\dots,X_n$ and $Y_1,Y_2,\dots,Y_n$ be two independent sequences of i.i.d random variables such that $P(X_i=X_j)=0$ and $P(Y_i=Y_j)=0$ for all $i\neq j$. Then there exist random variables $\widetilde{Y}_1,\widetilde{Y}_2,\dots, \widetilde{Y}_n$ such that the following hold. 
\begin{enumerate}
    \item The random vectors $(\widetilde{Y}_1,\widetilde{Y}_2,\dots, \widetilde{Y}_n)$ and $(Y_1,Y_2,\dots,Y_n)$ are equal in distribution.
    \item
    For $1\le i,j\le n$, $X_i\le X_j \text{ if and only if } \widetilde{Y}_i\le \widetilde{Y}_j$.
    \item $\sum_{i=1}^{n} |X_i-\widetilde{Y}_{i}|\le\sum_{i=1}^{n} |X_i-Y_i|$, and so $\sum_{i=1}^{n} \E[|X_i-\widetilde{Y}_{i}|]\le\sum_{i=1}^{n} \E[|X_i-Y_i|]$.
\end{enumerate} 
\begin{proof}
    Let $Y_{(1)}\le Y_{(2)}\le\dots\le Y_{(n)}$ be the ordering of the random variables $Y_1,Y_2,\dots, Y_n$. For $1\le i\le n$, we define $\widetilde{Y}_{i}$ to be $Y_{(j)}$ if $X_i$ is the $j$th smallest among $(X_1, X_2, \dots, X_n)$. Then, the second claim is straightforward from the definition. Since the random variables $X_1, X_2,\dots, X_n$ are i.i.d, $\widetilde{Y}_1,\widetilde{Y}_2,\dots, \widetilde{Y}_n$ is therefore a random permutation of the i.i.d random variables $Y_1,Y_2,\dots,Y_n$, which implies the first claim. The last claim follows from the second claim.
\end{proof}
\end{lemma}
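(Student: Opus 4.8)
The plan is to build $\widetilde{Y}_i$ by a rank-matching coupling of the two samples. I would sort the $Y$'s into their order statistics $Y_{(1)} \le \cdots \le Y_{(n)}$ (all inequalities strict almost surely, since there are no ties) and set $\widetilde{Y}_i = Y_{(\rho(i))}$, where $\rho(i)$ is the rank of $X_i$ among $X_1, \ldots, X_n$; that is, the $X_i$ carrying the $j$th smallest value is assigned the $j$th smallest $Y$. With this definition claim (2) is immediate, because $X_i \le X_j$ holds if and only if $\rho(i) \le \rho(j)$, which by monotonicity of the order statistics is equivalent to $\widetilde{Y}_i \le \widetilde{Y}_j$.

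For claim (1) I would use an exchangeability argument. Since the $X_i$ are i.i.d.\ and tie-free, the rank permutation $\rho$ is uniform over the symmetric group $S_n$, and because the two sequences are independent, $\rho$ is independent of the order statistics $(Y_{(1)}, \ldots, Y_{(n)})$. Thus $(\widetilde{Y}_1, \ldots, \widetilde{Y}_n)$ is a uniformly random permutation, independent of the data, applied to the order statistics of an i.i.d.\ sample. Invoking the standard decomposition of an i.i.d.\ tie-free sequence into its order statistics together with an independent uniform rank permutation, one recovers that $(\widetilde{Y}_1, \ldots, \widetilde{Y}_n)$ has the same joint law as $(Y_1, \ldots, Y_n)$.

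Claim (3) is the crux and is where I expect the real work. I would recognize the bound $\sum_i |X_i - \widetilde{Y}_i| \le \sum_i |X_i - Y_i|$ as an instance of the rearrangement (Monge) principle on the real line: over all permutations $\sigma$ of the $Y$'s, the cost $\sum_i |X_i - Y_{\sigma(i)}|$ is minimized by the $\sigma$ that orders the $Y$'s exactly as the $X$'s, and $\widetilde{Y}$ realizes precisely this order-matching permutation whereas the right-hand side is the identity. To prove optimality I would run an exchange argument anchored on the quadrangle inequality: for reals $a \le a'$ and $b \le b'$ one has $|a-b| + |a'-b'| \le |a-b'| + |a'-b|$, so any permutation that is not order-matching contains an inverted pair whose swap does not increase the total cost, and finitely many such swaps reduce any $\sigma$ to the order-matching assignment. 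Passing to expectations gives the averaged inequality. The delicate points are only the quadrangle inequality (a short case check over the relative orderings of the four points) and the termination of the swapping procedure, both routine; the conceptual content is that the rank-matching of claim (2) is automatically $\ell^1$-optimal.
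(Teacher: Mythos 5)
Your proposal is correct and follows essentially the same route as the paper: the identical rank-matching construction $\widetilde{Y}_i = Y_{(\rho(i))}$, the same uniform-rank-permutation argument for equality in distribution, and the order-preservation claim read off directly from the definition. The only difference is that you spell out the rearrangement (exchange/quadrangle) argument for the $\ell^1$ bound, which the paper compresses into the single sentence that claim (3) ``follows from the second claim''; your elaboration is exactly the implicit content of that remark.
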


\begin{corollary}\label{Corollary: error bounds} Recall the definition of $R^{k}_{i,n,T_n}$ and $R^{\ge 2}_{i,n,T_n}$ from Remark \ref{Remark: def of R^k_i,n,T} and the definition of $R^{k}_{i}$ and $R^{\ge 2}_{i}$ from Section \ref{Subsection: approximation for the sampling probability and branch length}. 
Assume condition \eqref{Introduction: sequence condition, LLN} holds. Under a coupling satisfying the conclusions of  Lemma \ref{Lemma: error in the approximation of Y and H}, we have,
$$
\frac{1}{n} \left(\sum_{i=0}^{n-k}R^{k}_{i,n,T_n}-\sum_{i=1}^{n-k-1}R^{k}_{i}\right) \stackrel{P}{\rightarrow} 0.
$$
Assume condition \eqref{Introduction: sequence condition} holds. Under a coupling satisfying the conclusions of Lemma \ref{Lemma: error in the approximation of Y and H}, we have,
$$
\frac{1}{\sqrt{n}} \left(\sum_{i=0}^{n-2} R^{\ge 2}_{i,n,T_n}-\sum_{i=1}^{n-2}R^{\ge 2}_{i}\right) \stackrel{P}{\rightarrow} 0.
$$
\end{corollary}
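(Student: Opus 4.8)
The plan is to show that the scaled difference has $L^1$ norm, restricted to a high-probability coupling event, tending to zero, and then to invoke Markov's inequality. First I would condition on the event $A_n$ (for the $R^k$ statement) or $B_n$ (for the $R^{\ge 2}$ statement) supplied by Lemma \ref{Lemma: error in the approximation of Y and H}. Since $\P(A_n),\P(B_n)\to 1$, the complementary events contribute negligibly to convergence in probability, so it suffices to bound the difference on $A_n$ (resp.\ $B_n$). On these events the two sampling probabilities coincide, $Y_{n,T_n}=n\delta_{T_n}/W$, so the intensity height $\lambda q(\cdot,t)$ appearing in the exact and the approximate definitions is identical; only the horizontal endpoints of the regions and the red indicator events can differ.

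Next I would split each summand $R^{k}_{i,n,T_n}-R^{k}_{i}$ into its Poisson (blue) part and its indicator (red) part, and treat the extra boundary indices $i=0$ and $i=n-k$ of the exact sum separately. Each boundary term is a single Poisson count over a region of height at most $\lambda$ and width at most $T_n=O(\log n)$, plus at most one indicator, so its mean is $O(\lambda T_n)$; after dividing by $n$ (resp.\ $\sqrt n$) it vanishes in probability. For the Poisson parts of the bulk indices $1\le i\le n-k-1$ I would use that, with a common Poisson process $\Pi_i$, one has $|\Pi_i(A)-\Pi_i(B)|\le \Pi_i(A\triangle B)$, whose conditional mean is the area $|A\triangle B|$. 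Since the two regions share the same height function bounded by $\lambda$ and differ only in the left endpoint $\max_{i+1\le j\le i+k-1}H_{j}$ and the right endpoint $H_{i}\wedge H_{i+k}$, this area is at most $\lambda$ times the change in these endpoints, which is in turn bounded by the maximum, hence the sum, of the relevant $|H_{j,n,T_n}-H_{j}|$. Summing over $i$, each branch-length difference enters at most $O(k)$ terms, so the total Poisson contribution is at most $C_k\lambda\sum_{j}|H_{j,n,T_n}-H_{j}|$; taking expectations against $\mathbf 1_{A_n}$ (resp.\ $\mathbf 1_{B_n}$) and applying Lemma \ref{Lemma: error in the approximation of Y and H} yields a bound that, divided by $n$ (resp.\ $\sqrt n$), tends to $0$.

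The indicator parts are the crux. Because the red-mutation indicator $\mathbf 1_{\{\max_{i+1\le j\le i+k-1}H_{j}\le H_{i}\le H_{i+k}\}}$ depends only on the relative order of the branch lengths, a small perturbation of the $H$'s can still flip $\Theta(n)$ indicators if the exact and approximate orderings disagree, which would overwhelm both the $1/n$ and the $1/\sqrt n$ scalings. To prevent this I would build the coupling so that $(H_{i,n,T_n})$ and the approximate branch lengths share the same ranking; this is exactly what Lemma \ref{Lemma: ordering of coupling} provides, after conditioning on $Y_{n,T_n}=y$ so that both families are conditionally i.i.d. With the orderings matched, every red indicator in $R^{k}_{i,n,T_n}$ equals the corresponding one in $R^{k}_{i}$, so the indicator differences vanish identically, while claim (3) of Lemma \ref{Lemma: ordering of coupling} guarantees that the re-coupling does not increase the $L^1$ branch-length error, leaving the Poisson bound above intact.

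The main obstacle I anticipate is reconciling three demands on the branch-length coupling simultaneously: it must preserve the law of the approximate branch lengths so that $R^{k}_{i}$ keeps its intended distribution, match the ordering of the exact branch lengths so that the indicator differences vanish, and retain the per-coordinate $L^1$ smallness of Lemma \ref{Lemma: error in the approximation of Y and H}. The delicate case is the central limit scaling $1/\sqrt n$ under \eqref{Introduction: sequence condition}: there it is not enough that the empirical distributions be close, since the order-statistics fluctuations of two independent samples leave an $O(1)$ term after multiplying by $\sqrt n$. I would therefore realize the order-matching through the monotone (quantile) coupling, which is at once law-preserving, order-preserving, and optimal for the $L^1$ cost, and hence inherits the sharp bound of Lemma \ref{Lemma: error in the approximation of Y and H}. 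The $R^{\ge 2}$ statement is handled by the same three-step scheme, now with left endpoint $H_{i+1}$ and right endpoint $H_{i}$ in place of the max and min above, and with the scaling $1/\sqrt n$ throughout.
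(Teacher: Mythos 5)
Your overall strategy matches the paper's proof: restrict to the coupling events from Lemma \ref{Lemma: error in the approximation of Y and H}, use the order-preserving (quantile) coupling of Lemma \ref{Lemma: ordering of coupling} so that the red indicator contributions cancel exactly, and bound the blue (Poisson) differences by the Lebesgue measure of the symmetric difference of the two regions, which is at most $\lambda$ times a sum of branch-length errors that Lemma \ref{Lemma: error in the approximation of Y and H} controls after dividing by $n$ (resp.\ $\sqrt{n}$). Your observation that without the order-matching a small perturbation could flip $\Theta(n)$ indicators, overwhelming both scalings, is exactly the point of the paper's Lemma \ref{Lemma: ordering of coupling}.

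However, your treatment of the boundary terms contains a genuine error. You bound the extra term $R^{\ge 2}_{0,n,T_n}$ (and likewise $R^{k}_{0,n,T_n}$, $R^{k}_{n-k,n,T_n}$) by a Poisson count over a region of ``width at most $T_n=O(\log n)$,'' but the hypotheses \eqref{Introduction: sequence condition, LLN} and \eqref{Introduction: sequence condition} only bound $T_n$ from \emph{below} (they force $e^{rT_n}\gg n$, resp.\ $e^{rT_n}\gg n^{3/2}\log n$); there is no upper bound, so $T_n$ may grow like $n$ or faster, in which case $\lambda T_n/\sqrt{n}$ (or $\lambda T_n/n$) does not vanish and your bound proves nothing. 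The correct argument, which is what the paper does, exploits that the $0$th-branch region is $[H_{1,n,T_n},T_n]$ rather than $[0,T_n]$: write $R^{\ge 2}_{0,n,T_n}\le |R^{\ge 2}_{0,n,T_n}-R^{\ge 2}_{0}|+R^{\ge 2}_{0}$, bound the first term by $\lambda\,\E[|H_{1,n,T_n}-H_{1}|\mid A_y]$ exactly as for the bulk terms, and bound $\E[R^{\ge 2}_{0}\mid A_y]$ by $\lambda\,\E[T_n-H_{1}\mid A_y]=\frac{\lambda}{r}\E[\log n+\log(1/W)+U_{1}\mid A_y]$, which is of order $\log n$ because the first branch length reaches to within order $\log n$ of the top of the tree---not because $T_n$ itself is $O(\log n)$. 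The same device, together with the paper's observation that $R^{k}_{n-k,n,T_n}$ has the same distribution as $R^{k}_{0,n,T_n}\le R^{\ge 2}_{0,n,T_n}$, handles the $i=n-k$ term in the first claim. With this correction, your proof coincides with the paper's.
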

\begin{proof}
In view of Lemma \ref{Lemma: ordering of coupling}, we may assume that conditional on $Y_{n, T_n}=y=n\delta_{T_n}/ W$, the random variables $H_i$ and $H_{i,n, T_n}$ are coupled so that $H_i\le H_j$ if and only if $H_{i,n, T_n}\le H_{j,n, T_n}$. Therefore, the only error in the approximation comes from the blue mutations. We write $A\triangle B = (A\setminus B)\cup (B\setminus A)$ and $\mathbf{Leb}$ for the Lebesgue measure on $\R^2$. Conditional on the event $A_y=\{Y_{n,T_n}=y=n\delta_{T_n}/ W\}$, we have
\begin{equation*}
\begin{split}
     \E\left[\left|R^{\ge 2}_{0,n,T_n}\right|\Big|A_y\right]
     &\le \E\left[\left|R^{\ge 2}_{0,n,T_n}-R^{\ge 2}_{0}\right|+R^{\ge 2}_0\Big|A_y\right]\\
     &\le\E\left[\left|R^{\ge 2}_{0,n,T_n}-R^{\ge 2}_{0}\right|\Big|A_y\right]+\lambda\E[T-H_1|A_y]\\
     &=\E\left[\left|R^{\ge 2}_{0,n,T_n}-R^{\ge 2}_{0}\right|\Big|A_y\right]+\frac{\lambda}{r}\E[\log n+\log(1/W)+U_i|A_y],\\
\end{split}
\end{equation*}
and 
\begin{align*}
\E\left[\left|R^{\ge 2}_{0,n,T_n}-R^{\ge 2}_{0}\right|\Big|A_y\right]
&=\E[|\Pi_0(\mathcal{A}(\lambda q(y,\cdot),[H_{1,n,T_n}, T]))-\Pi_0(\mathcal{A}(\lambda q(y,\cdot),[H_{1}, T]))||A_y]\\
&\le \E[\mathbf{Leb}(\mathcal{A}(\lambda q(y,\cdot),[H_{1,n,T_n}, T])\triangle(\mathcal{A}(\lambda q(y,\cdot),[H_{1}, T])))|A_y]\\
&\le \lambda \E[|H_{1, n,T_n}-H_{1}||A_y].   
\end{align*}
For $1\le i\le n-2$, we have 
\begin{align*}
    \E\left[\left|R^{\ge 2}_{i,n,T_n}-R^{\ge 2}_{i}\right|\Big|A_y\right]
    &=\E[|\Pi_i(\mathcal{A}(\lambda q(y,\cdot),[H_{i+1,n,T_n}, H_{i,n,T_n}]))-\Pi_i(\mathcal{A}(\lambda q(y,\cdot),[H_{i+1}, H_{i}]))||A_y]\\
    &\le \E[\mathbf{Leb}(\mathcal{A}(\lambda q(y,\cdot),[H_{i+1,n,T_n}, H_{i,n,T_n}])\triangle(\mathcal{A}(\lambda q(y,\cdot),[H_{i+1}, H_{i}])))|A_y]\\
    &\le \lambda \E[|H_{i+1, n,T_n}-H_{i+1}|+|H_{i,n,T_n}-H_{i}||A_y].
\end{align*} 
The second claim then follows from integrating over the event $B_n$ in Lemma \ref{Lemma: error in the approximation of Y and H}.
The first claim can be proved in an analogous way using the inequality
\begin{align*}
    \E\left[\left|R^{k}_{i,n,T_n}-R^{k}_{i}\right|\Big|A_y\right]\le \lambda \sum_{j=0}^k\E[|H_{i+j, n,T_n}-H_{i+j}||A_y],
\end{align*}
and the observation that $R^{k}_{n-k,n,T_n}$ has the same distribution as $R^{k}_{0,n,T_n}$, and that $R^{k}_{0,n,T_n}\le R^{\ge 2}_{0,n,T_n}$.
% Integrating over $y$, summing over $i$, and using Lemma \ref{Lemma: error in the approximation of Y and H} gives the result.
\end{proof}

\section{Proof of Theorems \ref{Theorem: LLN} and \ref{Theorem}}\label{Section: proof of theorems}
In this section, we start with Propositions \ref{Proposition: lln} and \ref{Proposition: clt}, which are the counterparts of Theorems \ref{Theorem: LLN} and \ref{Theorem} using the approximations $R^{\ge 2}$ and $R^{k}$. We prove Propositions \ref{Proposition: lln} and \ref{Proposition: clt} in Sections \ref{Subsection: proof of theorem LLN} and \ref{Subsection: proof of theorem}. Theorems \ref{Theorem: LLN} and \ref{Theorem} then follow from Corollary \ref{Corollary: error bounds}. We should emphasize that $R^{\ge 2}$ and $R^{k}$ depend on $n$ and $T_n$, although not made explicit in the notation.

\begin{proposition}\label{Proposition: lln}
Under condition \eqref{Introduction: sequence condition, LLN}, we have the following convergence in probability as $n\rightarrow\infty$
$$
\frac{R^{k}}{n} \stackrel{P}{\longrightarrow} \frac{\lambda}{rk(k-1)}.
$$  
\end{proposition}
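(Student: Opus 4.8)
The plan is to condition on $W$, compute the mean of $R^k$, bound its variance, and conclude by Chebyshev's inequality. Conditional on $W=w$, the quantity $Y=n\delta_{T_n}/w$ is fixed and the branch lengths $H_1,\dots,H_{n-1}$ are i.i.d., since $H_i=T_n-(\log n+\log(1/w)+U_i)/r$ is an affine image of the i.i.d.\ logistic variables $U_i$. It is convenient to work in the logistic coordinate: writing $\sigma(u)=e^u/(1+e^u)$ for the logistic CDF and $t_n(u)=T_n-(\log n+\log(1/w)+u)/r$ for the height at which the value $u$ sits (so that $H_i=t_n(U_i)$), a direct computation from the formula for $q(y,t)=F_t(1-y)$ shows that for each fixed $w>0$ and $u\in\R$,
$$
\lambda\, q\!\left(n\delta_{T_n}/w,\ t_n(u)\right) \longrightarrow \mu+r\sigma(u)\qquad(n\to\infty),
$$
using $\delta_{t_n(u)}\big/(n\delta_{T_n}/w)\to e^{u}$ and $\delta_{t_n(u)}e^{rt_n(u)}\to r/\lambda$ under condition \eqref{Introduction: sequence condition, LLN}. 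This limiting rate interpolates between $\mu$ near the root ($u\to-\infty$) and $\lambda$ near the leaves ($u\to+\infty$), matching the heuristics after Proposition \ref{Proposition: construction}.

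For the mean, I would decompose $R^k_i$ into its blue (Poisson) and red (indicator) parts. Changing variables via $t=t_n(u)$ and using Fubini together with the independence of the $U_j$, the conditional expectation of the blue part equals
$$
\frac1r\int_\R \lambda\, q\!\left(n\delta_{T_n}/w,t_n(u)\right)\,\sigma(u)^2\bigl(1-\sigma(u)\bigr)^{k-1}\,du,
$$
the factor $\sigma(u)^2$ arising from the two outer branches $H_i,H_{i+k}$ reaching above height $t_n(u)$ and $(1-\sigma(u))^{k-1}$ from the $k-1$ inner branches staying below it. Since $q\le 1$ and the weight is integrable with all moments, dominated convergence and the substitution $p=\sigma(u)$ give the limit $\tfrac{\mu}{r}B(2,k-1)+B(3,k-1)=\tfrac{\mu}{rk(k-1)}+\tfrac{2}{(k+1)k(k-1)}$, where $B$ is the Beta function. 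The red part contributes $\P(\min_{i+1\le j\le i+k-1}U_j\ge U_i\ge U_{i+k})=\tfrac{1}{k(k+1)}$, the probability that among the $k+1$ exchangeable continuous variables $U_i,\dots,U_{i+k}$ the variable $U_{i+k}$ is the smallest and $U_i$ the second smallest. Adding the three terms, the last two combine as
$$
\frac{2}{(k+1)k(k-1)}+\frac1{k(k+1)}=\frac1{k(k-1)},
$$
so that $\E[R^k_i\mid W=w]\to \tfrac{\mu}{rk(k-1)}+\tfrac{1}{k(k-1)}=\tfrac{\lambda}{rk(k-1)}$, using $\mu+r=\lambda$. Since this limit is the same for every $1\le i\le n-k-1$ and is bounded uniformly in $i,n,w$ (using $q\le1$), summing and dividing by $n$ gives $\E[R^k\mid W]/n\to \lambda/(rk(k-1))$ pointwise in $W$ and, by bounded convergence, in $L^2$; in particular $\E[R^k]/n\to\lambda/(rk(k-1))$ and $\Var(\E[R^k\mid W])/n^2\to 0$.

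It then remains to bound $\E[\Var(R^k\mid W)]$. Conditional on $W$, the variables $R^k_i$ and $R^k_{i'}$ are independent whenever $|i-i'|>k$, since they are built from the independent processes $\Pi_i,\Pi_{i'}$ and from disjoint windows of the i.i.d.\ branch lengths; thus $\{R^k_i\}$ is $k$-dependent and only $O(nk)$ covariance terms survive. Each $R^k_i$ is a Poisson count with random mean $\Lambda_i$ plus a $\{0,1\}$-valued indicator, and $\Lambda_i\le(\lambda/r)$ times the width (in the logistic coordinate) of the relevant window, a difference of logistic order statistics with finite moments that does not depend on $w$; hence $\Var(R^k_i\mid W)$ is bounded uniformly in $i,n,w$, and by Cauchy--Schwarz $\Var(R^k\mid W)=O(nk)$ uniformly. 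Therefore $\E[\Var(R^k\mid W)]/n^2\to0$, and combined with the previous paragraph $\Var(R^k)/n^2\to0$. Chebyshev's inequality then yields $R^k/n\stackrel{P}{\longrightarrow}\lambda/(rk(k-1))$.

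The main obstacle is the uniformity underlying the mean computation: one must justify that the time-inhomogeneous rate $\lambda q(n\delta_{T_n}/w,t_n(u))$ converges to $\mu+r\sigma(u)$ and may be integrated against the logistic weight uniformly enough to pass to the limit, and that this is controlled simultaneously across all branch indices $i$ and, after conditioning, across the randomness in $W$ (small values of $W$ make $Y=n\delta_{T_n}/W$ large). The uniform bound $q\le 1$ together with the exponential tails of the logistic law make dominated and bounded convergence available, so once these estimates are in place the remaining steps are routine.
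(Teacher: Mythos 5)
Your proposal is correct, and its core coincides with the paper's argument: condition on $W$, pass to the logistic coordinate, and use dominated convergence to show $\E[R^k_i \mid W=w]\to \lambda/(rk(k-1))$. This is precisely the computation of \eqref{Proof of theorems, moment computation, R^k mean} in Lemma \ref{Lemma: moment computation}, with the same blue (Poisson) versus red (indicator) decomposition, the same weight $\sigma(u)^2(1-\sigma(u))^{k-1}$, and the same beta-type integrals; your identity $B(3,k-1)+\tfrac{1}{k(k+1)}\cdot\tfrac{k(k+1)}{k(k+1)}$ bookkeeping matches the paper's \eqref{Proof of theorems, moment computation, R^k_i mean}. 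Where you genuinely differ is the concentration step. The paper exploits the finite-range dependence by splitting $\sum_i R^k_i$ into $k+1$ subsequences that are i.i.d.\ given $W=w$, applies the law of large numbers to each, and recombines, de-conditioning on $W$ by dominated convergence; you instead bound $\Var\bigl(\sum_i R^k_i \mid W=w\bigr)=O(nk)$ directly from the same $k$-dependence plus uniform second moments, combine with the law of total variance, and finish with Chebyshev. The two finishes are morally equivalent, but yours is more explicit on a point the paper leaves implicit: because the law of $R^k_i$ depends on $n$ (through $q(n\delta_{T_n}/w,\cdot)$ and $T_n$), the summands form a triangular array, so ``applying the LLN'' to the subsequences itself needs a uniform variance or uniform integrability bound of exactly the kind you prove (the Poisson intensity is at most $\lambda/r$ times a logistic order-statistic gap, independent of $n$ and $w$). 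That same uniformity in $w$ is what lets you integrate out $W$ cleanly, so your write-up is a slightly more self-contained version of the paper's proof rather than a different one.
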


\begin{proposition}\label{Proposition: clt}
Under condition \eqref{Introduction: sequence condition}, we have the following convergence in distribution as $n\rightarrow\infty$,
$$
\frac{1}{\sqrt{n}}\left(R^{\ge 2}-\frac{n\lambda}{r}\right) \Rightarrow N\left(0,\frac{\lambda^2}{r^2}\right).
$$
\end{proposition}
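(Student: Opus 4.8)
The plan is to condition on the common exponential variable $W$ and to exploit the fact that, given $W$, the summands of $R^{\ge 2}=\sum_{i=1}^{n-2}R^{\ge 2}_i$ form a stationary $1$-dependent sequence. Writing $\Phi_i=R^{\ge 2}_i=\Pi_i(\mathcal{A}(\lambda q(n\delta_{T}/W,\cdot),[H_{i+1},H_i]))+\mathbbm{1}_{\{H_{i+1}\le H_i\}}$, each $\Phi_i$ is a function of $(W,U_i,U_{i+1},\Pi_i)$ alone. Because the $U_i$ are i.i.d.\ and the $\Pi_i$ are independent, conditionally on $W=w$ the variables $\Phi_i$ and $\Phi_j$ are independent as soon as $|i-j|\ge 2$; thus $\{\Phi_i\}_{i=1}^{n-2}$ is a stationary $1$-dependent triangular array. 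It is also worth recording that, since $H_i=T-(\log n+\log(1/W)+U_i)/r$ is decreasing in $U_i$, we have $\mathbbm{1}_{\{H_{i+1}\le H_i\}}=\mathbbm{1}_{\{U_{i+1}\ge U_i\}}$, so the red part of $R^{\ge 2}$ is exactly the number of ascents of $U_1,\dots,U_{n-1}$.

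Given $W=w$, I would invoke a central limit theorem for $1$-dependent triangular arrays. A Lyapunov condition is easy to verify: the logistic tails make the Poisson means $\lambda\int_{H_{i+1}}^{H_i}q\,dt$ have exponential tails, so the $\Phi_i$ have moments of all orders bounded uniformly in $i$ and $n$. This yields $(R^{\ge 2}-(n-2)\mu_n(w))/\sqrt n\Rightarrow N(0,\sigma^2(w))$, where $\mu_n(w)=\E[\Phi_1\mid W=w]$ and $\sigma^2(w)=\lim_n(\Var(\Phi_1\mid W=w)+2\Cov(\Phi_1,\Phi_2\mid W=w))$ is the long-run variance.

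First I would evaluate these limits and check that they are free of $w$. The substitution $t=h_0-u/r$ with $h_0=T-(\log n+\log(1/w))/r$ rewrites the branch integral as $\tfrac1r\int_{U_1}^{U_2}q(y,h_0-u/r)\,du$ on $\{U_1<U_2\}$, and one computes $y\,e^{rt}\to(r/\lambda)e^{-u}$, hence $\lambda q(y,h_0-u/r)\to\lambda-r/(1+e^u)$, a limit independent of $w$. Using the logistic identities $\int F(1-F)\,du=1$ (so $\E|U_2-U_1|=2$) and $\int F(1-F)^2\,du=\tfrac12$, this gives $\mu_n(w)\to\frac{\lambda}{r}\cdot\frac12\E|U_2-U_1|-\frac12+\frac12=\frac{\lambda}{r}$, the mean $\tfrac12$ of the red indicator exactly cancelling the $-\tfrac12$ produced by the deficit $r/(1+e^u)$ in the blue rate. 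The long-run variance $\sigma^2(w)$ is computed by the same substitution from the second-order structure of $(U_1,U_2,U_3)$ and the conditional Poisson variances, and the claim is that it equals $\lambda^2/r^2$; I expect this variance computation to be the main obstacle, since one must assemble $\Var(C_1)=\tfrac14$, $\Cov(C_1,C_2)=\tfrac16-\tfrac14=-\tfrac1{12}$, the blue variances and the mixed blue--red covariances into the single constant $\lambda^2/r^2$.

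Two estimates must accompany this. The corrections to $\lambda q(y,h_0-u/r)\to\lambda-r/(1+e^u)$ are of order $e^{-rH_i}=O(n e^{U_i}/(w e^{rT}))$, so their total contribution to $(n-2)\mu_n(w)$ is $O(n^2 e^{-rT}/w)=o(\sqrt n)$ precisely because hypothesis \eqref{Introduction: sequence condition} forces $n^{3/2}e^{-rT_n}\to0$; this is exactly where the strengthening of \eqref{Introduction: sequence condition, LLN} is used, and it gives $((n-2)\mu_n(w)-n\lambda/r)/\sqrt n\to0$ for each fixed $w$. Finally, to remove the conditioning I would argue at the level of characteristic functions: $\E[e^{i\theta(R^{\ge 2}-n\lambda/r)/\sqrt n}]=\E_W[\phi_n(\theta\mid W)]$ with $\phi_n(\theta\mid w)\to e^{-\theta^2\lambda^2/(2r^2)}$ for each $w$. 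Since $|\phi_n|\le1$ and the limit does not depend on $w$, bounded convergence --- splitting off the low-probability event $\{W<\varepsilon\}$ to absorb the behaviour as $w\to0$, then letting $\varepsilon\to0$ --- delivers the unconditional convergence to $N(0,\lambda^2/r^2)$.
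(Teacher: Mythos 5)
Your strategy coincides with the paper's: condition on $W=w$, observe that $\{R^{\ge 2}_i\}$ is a stationary $1$-dependent array, apply an $m$-dependent CLT whose Lindeberg/Lyapunov condition is checked via uniform moment bounds (the paper does this with Diananda's theorem and exactly your domination $R^{\ge 2}_i \le \Pi_i(\mathcal{A}(\lambda,[H_{i+1},H_i]))+1$), compute the conditional mean with error $o(\sqrt{n})$, and de-condition by bounded convergence of characteristic functions. Your mean computation is correct, including the limit $\lambda q \to \mu + r/(1+e^{-s})=\lambda - r/(1+e^{s})$ and the cancellation $\lambda/r-\tfrac12+\tfrac12$. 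However, there is a genuine gap at the core of the argument: you never compute the long-run variance $\sigma^2(w)=\lim_n\bigl(\Var(R^{\ge 2}_1\mid W=w)+2\Cov(R^{\ge 2}_1,R^{\ge 2}_2\mid W=w)\bigr)$; you record only the red--red pieces ($\tfrac14$ and $-\tfrac1{12}$) and assert that the total "is claimed" to equal $\lambda^2/r^2$. Since the value $\lambda^2/r^2$, and in particular its independence of $w$, is precisely the quantitative content of the proposition, this step cannot be deferred, and it is not routine: the blue--blue second moments involve dilogarithm-type double integrals producing $\pi^2/6$ terms. In Lemma \ref{Lemma: moment computation} the paper finds $\Var(R^{\ge 2}_i\mid W=w)\to(\tfrac{\pi^2}{3}-1)\tfrac{\mu^2}{r^2}+(\tfrac{\pi^2}{3}+1)\tfrac{\mu}{r}+2$ and $\Cov(R^{\ge 2}_i,R^{\ge 2}_{i+1}\mid W=w)\to(1-\tfrac{\pi^2}{6})\tfrac{\mu^2}{r^2}+(\tfrac12-\tfrac{\pi^2}{6})\tfrac{\mu}{r}-\tfrac12$, and only in the combination $\Var+2\Cov$ do the $\pi^2$ terms cancel to give $\tfrac{\mu^2}{r^2}+\tfrac{2\mu}{r}+1=\tfrac{\lambda^2}{r^2}$. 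Without this computation your proof does not establish the stated variance.

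A secondary issue: your error estimate for the mean is not justified as written. You claim the corrections to $\lambda q$ are of order $e^{-rH_i}=O(ne^{U_i}e^{-rT}/w)$ and therefore contribute $O(n^2e^{-rT}/w)=o(\sqrt n)$ in total; but $\E[e^{U_i}]=\infty$ for the logistic distribution, and if one uses only the pointwise bound that the rate error at $t(s)$ is $O(e^{-rt(s)})=O(ne^{s}e^{-rT}/w)$, then integrating against $\P(U_{i+1}\ge s\ge U_i)\le e^{-s}$ over $s\in[0,rT_n-\log n-\log(1/w)]$ gives a constant integrand and hence an extra factor of $T_n$: the per-branch error is $O(T_n n e^{-rT_n}/w)$, not $O(ne^{-rT_n}/w)$. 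This is exactly why condition \eqref{Introduction: sequence condition} carries the $\log n$ factor and why the paper deduces $n^{3/2}T_ne^{-rT_n}\to 0$ from it before estimating. Your sharper per-branch bound happens to be true, but only because of a cancellation near $t\approx 0$ between the error in the $\mu$-part and the error in the $r/(1+e^{-s})$-part of $\lambda q$, which you would need to exhibit explicitly; alternatively, under the stated hypothesis the cruder bound with the $T_n$ factor already suffices, so this defect is repairable---unlike the missing variance computation, which is the main obstacle you yourself flagged but did not overcome.
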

To prove Proposition \ref{Proposition: lln} or \ref{Proposition: clt}, it suffices to show that the convergence in probability or distribution holds conditional on $W=w$, regardless of the value of $w$. Consider Proposition \ref{Proposition: clt} for example; we will show that the conditional distribution of $R^{\ge 2}$ given $W=w$ is asymptotically normal with the same mean and variance given by Proposition \ref{Proposition: clt}, regardless of the value of $w$. Proposition \ref{Proposition: clt} then follows from applying the dominated convergence theorem to the characteristic function:
\begin{align*}
&\lim_{n\rightarrow\infty}\E\left[\exp\left(\frac{it}{\sqrt{n}}\left(R^{\ge 2}-\frac{n\lambda}{r}\right)\right)\right] 
=\lim_{n\rightarrow\infty}\E\left[\E\left[\exp\left(\frac{it}{\sqrt{n}}\left(R^{\ge 2}-\frac{n\lambda}{r}\right)\right)\Big| W\right]\right]\\
&\qquad\qquad=\E\left[\lim_{n\rightarrow\infty}\E\left[\exp\left(\frac{it}{\sqrt{n}}\left(R^{\ge 2}-\frac{n\lambda}{r}\right)\right)\Big| W\right]\right]=\exp\left(-\frac{t^2\lambda^2}{2r^2}\right).
\end{align*}

\subsection{Moment computations}
In this section, for positive functions $f,g,h$, we write
$$
f = g(1+O(h))\qquad\text{ if } |f-g|\le Cgh \text{ for some universal constant } C.
$$
We will also use the following fact. For positive integers $0\le m\le n-2$,
\begin{equation}\label{Proof of theorems, moment computation, calculus fact}
\int_{-\infty}^\infty \frac{e^{(m+1)s}}{(1+e^s)^n}\ ds = \int_0^\infty \frac{x^m}{(1+x)^n}\ dx=\sum_{k=0}^m (-1)^{m-k} \frac{1}{n-k-1} {m\choose k}.    
\end{equation}
Another observation is that condition \eqref{Introduction: sequence condition} implies that
\begin{equation}\label{Introduction: sequence condition, equivalent}
\lim_{n\rightarrow\infty} n^{3/2} T_n e^{-rT_n} = 0.
\end{equation}
Indeed, if equation \eqref{Introduction: sequence condition, equivalent} fails, then we may choose a sequence $\{n_k\}$ such that
$$
\lim_{n\rightarrow\infty} n_k^{3/2} T_{n_k} e^{-rT_{n_k}} = c>0.
$$
Combined with \eqref{Introduction: sequence condition}, we can deduce that $\lim_{n\rightarrow\infty} (\log n_k/T_{n_k}) = 0$. However, this implies
$$
\lim_{n\rightarrow\infty} n_k^{3/2} T_{n_k} e^{-rT_{n_k}} = 0,
$$
which is a contradiction.

\begin{lemma}\label{Lemma: moment computation}
We have the following moment estimates for the number of reproduction events with at least two descendants in the sample, excluding those on the $0$th branch. Assume condition \eqref{Introduction: sequence condition, LLN} holds. Then for each fixed $w\in (0,\infty)$, we have
\begin{equation}\label{Proof of theorems, moment computation, R^>=2 variance}
\lim_{n\rightarrow\infty}n^{-1}\Var\left(\sum_{i=1}^{n-2}R^{\ge 2}_{i}\Big|W=w\right)=\frac{\lambda^2}{r^2},    
\end{equation}
\begin{equation}\label{Proof of theorems, moment computation, R^k mean}
\lim_{n\rightarrow\infty}n^{-1}\E\left[\sum_{i=1}^{n-k-1}R^{ k}_{i}\Big|W=w\right]=\frac{\lambda}{rk(k-1)}.  
\end{equation}
Assume condition \eqref{Introduction: sequence condition} holds. Then for each fixed $w\in(0,\infty)$, we have
\begin{equation}\label{Proof of theorems, moment computation, R^>=2 mean}
\lim_{n\rightarrow\infty}\frac{1}{\sqrt{n}}\left|\E\left[\sum_{i=1}^{n-2}R^{\ge 2}_{i}\Big|W=w\right] -\frac{n\lambda}{r}\right|=0.
\end{equation}

\end{lemma}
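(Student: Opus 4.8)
The plan is to work throughout conditional on $W=w$ and to exploit that, given $W=w$, the branch lengths are $H_i = T_n-(\log n+\log(1/w)+U_i)/r$ with $U_1,\dots,U_{n-1}$ i.i.d.\ logistic, so that $H_i\le H_j$ if and only if $U_j\le U_i$. I would first record the key substitution: writing $a=\log n+\log(1/w)$ and changing variables $u=r(T_n-t)-a$ in any branch integral $\int \lambda q(n\delta_{T_n}/w,t)\,dt$ turns it into $(\lambda/r)\int q(n\delta_{T_n}/w,t(u))\,du$, whose endpoints in $u$ are exactly the relevant $U$'s. Using $\delta_t e^{rt}\to r/\lambda$ and $\delta_{t(u)}/y\to e^u$ (which follows from $y=n\delta_{T_n}/w$ and $e^{rt(u)}=e^{rT_n}(w/n)e^{-u}$), the formula \eqref{Mutation in a sample from a population, Bernoulli(y) sampling, values of q(y,t) and delta_y,t} for $q$ yields the pointwise limit
\[
q\Big(\tfrac{n\delta_{T_n}}{w},t(u)\Big)\longrightarrow \frac{\mu}{\lambda}+\frac{r}{\lambda}\cdot\frac{e^u}{1+e^u}.
\]
This single approximation drives all three estimates.

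For the two mean estimates \eqref{Proof of theorems, moment computation, R^k mean} and \eqref{Proof of theorems, moment computation, R^>=2 mean}, I would split each $R^k_i$ (resp.\ $R^{\ge 2}_i$) into its blue Poisson part and its red indicator part. By linearity of expectation and the i.i.d.\ structure of the $U_i$, every term has the same mean, so the sum equals $(n-k-1)\E[R^k_1\mid W=w]$ (resp.\ $(n-2)\E[R^{\ge 2}_1\mid W=w]$). The red indicator is an order-statistic event among finitely many $U$'s, computed combinatorially: $\P(H_{i+1}\le H_i)=1/2$, while the $R^k$ indicator requires $U_{i+k}$ to be the smallest and $U_i$ the second smallest among $k+1$ variables, probability $1/(k(k+1))$. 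For the blue mean I would use Fubini to write it as an integral of the (rescaled) intensity against a product of logistic distribution/survival functions; this product is itself a logistic-type density (for $R^{\ge 2}$, $F(u)(1-F(u))=e^u/(1+e^u)^2$; for $R^k$, $F(u)^2(1-F(u))^{k-1}=e^{2u}/(1+e^u)^{k+1}$), so after inserting the limiting $q$ the integrals are evaluated directly by the calculus fact \eqref{Proof of theorems, moment computation, calculus fact}. Summing the blue and red contributions, they combine to give exactly $\lambda/(rk(k-1))$ and $\lambda/r$, respectively.

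For the variance \eqref{Proof of theorems, moment computation, R^>=2 variance}, the decisive observation is that $R^{\ge 2}_i$ depends only on $(U_i,U_{i+1},\Pi_i)$, so $\Cov(R^{\ge 2}_i,R^{\ge 2}_j)=0$ whenever $|i-j|\ge 2$; hence $\Var\big(\sum_{i=1}^{n-2}R^{\ge 2}_i\mid W=w\big)=(n-2)\Var(R^{\ge 2}_1\mid W=w)+2(n-3)\Cov(R^{\ge 2}_1,R^{\ge 2}_2\mid W=w)$, and dividing by $n$ leaves only the two limiting constants. I would compute each by the law of total variance conditioning on the $U$'s: given the $U$'s the blue part is Poisson with mean equal to the branch integral and the red part is deterministic, so the conditional variance is the branch integral itself, and the remaining terms again reduce to logistic integrals handled by \eqref{Proof of theorems, moment computation, calculus fact}.

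The main obstacle will be the error analysis required specifically for \eqref{Proof of theorems, moment computation, R^>=2 mean}, which demands $o(\sqrt n)$ rather than merely $o(n)$ precision. There I must quantify the rate at which $q(n\delta_{T_n}/w,t(u))$ approaches its limit, controlling the corrections of order $\delta_{t(u)}$, $e^{-rt(u)}$ and $y$ uniformly enough that, after summing $n$ terms, the accumulated error is $o(\sqrt n)$, and also dispose of the negligible-probability events where $U_i$ is so extreme that $t(u)$ leaves $[0,T_n]$ or the geometric approximations degrade. This is exactly where the stronger hypothesis \eqref{Introduction: sequence condition} (equivalently \eqref{Introduction: sequence condition, equivalent}) enters; for the variance and the $R^k$ mean only leading-order control is needed, so the weaker hypothesis \eqref{Introduction: sequence condition, LLN} suffices and the error bookkeeping is lighter.
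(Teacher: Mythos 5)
Your proposal follows essentially the same route as the paper's proof: the same red/blue decomposition of each $R^{\ge 2}_i$ and $R^k_i$, the same change of variables reducing branch integrals to logistic integrals against the limiting intensity $\mu+r/(1+e^{-s})$, the same quantified error analysis under \eqref{Introduction: sequence condition, equivalent} to reach $o(\sqrt{n})$ precision for \eqref{Proof of theorems, moment computation, R^>=2 mean}, and the same $1$-dependence observation (with your law-of-total-variance bookkeeping being equivalent to the paper's direct second-moment computation $\E[\Pi(\mathcal{A})^2]=\E[\mathbf{Leb}^2]+\E[\mathbf{Leb}]$). The one inaccuracy is your claim that the variance terms all reduce to \eqref{Proof of theorems, moment computation, calculus fact}: the bivariate moments $\E[(R^{\ge 2,blue}_i)^2\mid W=w]$ and $\E[R^{\ge 2,blue}_iR^{\ge 2,blue}_{i+1}\mid W=w]$ are genuinely two-dimensional logistic integrals producing dilogarithm-type constants (the $\pi^2/6$ terms), which the paper evaluates by computer and which only cancel when $\Var(R^{\ge 2}_i)+2\Cov(R^{\ge 2}_i,R^{\ge 2}_{i+1})$ is assembled into $\lambda^2/r^2$; this costs extra computation but does not affect the validity of your approach.
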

\begin{proof}
To prove \eqref{Proof of theorems, moment computation, R^>=2 mean}, for $1\le i\le n-2$,  we define
$$
R^{\ge 2, red}_{i} = 
\mathbbm{1}_{\{H_{i+1}\le H_{i}\}},\qquad
R^{\ge 2, blue}_{i}= \Pi_i(\mathcal{A}(\lambda q(n\delta_{T_n}/ W,\cdot),[H_{i+1},H_{i}])), 
$$
to be the approximations for the numbers of red and blue reproduction events on the $i$th branch.
Then we have 
\begin{equation}\label{Proof of theorems, moment computation, red i}
\E\left[R^{\ge 2, red}_{i}\Big| W=w\right] = \frac{1}{2}.
\end{equation}
We now compute $\E[R^{\ge 2, blue}_{i}| W=w]$. Using the change of variable $t=T_n-(s+\log n+\log(1/w))/r$, we have
\begin{align*}
&\E\left[R^{\ge 2, blue}_i\Big|W=w\right]\\ 
&\qquad=\E[\Pi(\mathcal{A}(\lambda q(n\delta_{T_n}/ w, \cdot)), [H_{i+1}, H_{i}])|W=w]\\
&\qquad= \int_0^{T_n} \P(H_{i+1}\le t< H_{i}|W=w) \lambda q(n\delta_{T_n}/ w,t)\ dt\\
&\qquad=\frac{1}{r}\int_{-\log n-\log(1/w)}^{rT_n-\log n-\log(1/w)} \P(U_{i+1}\ge s\ge U_{i}) \lambda q(n\delta_{T_n}/ w,T_n-(s+\log n+\log(1/w))/r) \ ds.
\end{align*}
We consider the limit of $\lambda q(n\delta_{T_n}/ w,T_n-(s+\log n+\log(1/w))/r)$ as $n\rightarrow\infty$. Recall the formula for $q(y,t)$ from \eqref{Mutation in a sample from a population, Bernoulli(y) sampling, values of q(y,t) and delta_y,t}. Writing $t(s)=T_n-(s+\log n+\log (1/w))/r$ and noticing that 
$$
\delta_t = \frac{r}{\lambda}e^{-rt}\left(1+O\left(e^{-rt}\right)\right),
$$
we have,
\begin{align*}
&\lambda q(n\delta_{T_n}/ w, t(s))\\
&\qquad=\lambda\left(1-\delta_{t(s)} e^{rt(s)}+\frac{\delta_{t(s)}(1-n\delta_{T_n} W)}{\delta_{t(s)}+n\delta_{T_n}/ w-\delta_{t(s)}n\delta_{T_n}/ w}\cdot\delta_{t(s)} e^{rt(s)}\right)\\
&\qquad= \lambda \left(1-\frac{r}{\lambda}\left(1+O\left(e^{-rt(s)}\right)\right)+\frac{\delta_{t(s)}}{\delta_{t(s)}+n\delta_{T_n}/ w}\cdot(1+O(n\delta_{T_n}/ w))\cdot\frac{r}{\lambda}\right)\\
&\qquad= \lambda \Big(1-\frac{r}{\lambda}\left(1+O\left(e^{-rt(s)}\right)\right)\\
&\qquad\qquad+\frac{(rn/\lambda w) e^{s-rT_n}}{(rn/\lambda w) e^{s-rT_n}+(rn/\lambda w)e^{-rT_n}}\cdot(1+O(e^{-rt(s)}+n\delta_{T_n}/ w))\cdot\frac{r}{\lambda}\Big)\\
&\qquad=\mu+\frac{r}{1+e^{-s}}+O(re^{-rt(s)}+rn\delta_{T_n}/ w).
\end{align*}
Therefore, using $\P(U_{i+1}\ge s\ge U_{i})\le \min\{e^s,e^{-s}\}$, there is a positive constant $C$ such that
\begin{align*}
&\left|\E\left[R^{\ge 2, blue}_i\Big|W=w\right]-\frac{1}{r}\int_{-\infty}^{\infty} \P(U_{i+1}\ge s\ge U_{i})\left(\mu+\frac{r}{1+e^{-s}}\right) \ ds\right|\\
&\qquad\le \frac{\lambda}{r}\int_{-\infty}^{-\log n-\log(1/w)} \P(U_{i+1}\ge s\ge U_{i})\ ds+\frac{\lambda}{r}\int_{rT_n-\log n-\log(1/w)}^{\infty} \P(U_{i+1}\ge s\ge U_{i})\ ds\\
&\qquad\qquad+\frac{1}{r}\int_{-\log n-\log(1/w)}^{rT_n-\log n-\log(1/w)}\P(U_{i+1}\ge s \ge U_{i})\left|\lambda q(n\delta_{T_n}/ w, t(s))-\mu-\frac{r}{1+e^{-s}}\right|\ ds\\
&\qquad\le \frac{\lambda}{r}\int_{-\infty}^{-\log n-\log(1/w)} e^s\ ds+\frac{\lambda}{r}\int_{rT_n-\log n-\log(1/w)}^{\infty} e^{-s}\ ds\\
&\qquad\qquad+\frac{1}{r}\int_{-\log n-\log(1/w)}^{0}e^{s}\cdot Cr(e^{-rt(s)}+n\delta_{T_n}/ w) \ ds\\
&\qquad\qquad+\frac{1}{r}\int_{0}^{rT_n-\log n-\log(1/w)}e^{-s}\cdot Cr(e^{-rt(s)}+n\delta_{T_n}/ w) \ ds\\
&\qquad\le \frac{\lambda}{r}\cdot\frac{w}{n}+\frac{\lambda}{r}\cdot \frac{n}{we^{rT_n}}+CrT_n\left(\frac{n e^{-rT_n}}{w}+n\delta_{T_n}/ w\right).\\
\end{align*}
By \eqref{Proof of theorems, moment computation, calculus fact}, we have 
\begin{align*}
&\frac{1}{r}\int_{-\infty}^{\infty} \P(U_{i}\ge s\ge U_{i+1})\left(\mu+\frac{r}{1+e^{-s}}\right) \ ds=\frac{1}{r}\int_{-\infty}^{\infty} \frac{e^s}{(1+e^s)^2}\left(\mu+\frac{r}{1+e^{-s}}\right) \ ds=\frac{\mu}{r}+\frac{1}{2}.
\end{align*}
Therefore, under condition \eqref{Introduction: sequence condition, equivalent}, we have
\begin{equation}\label{Proof of theorems, moment computation, blue i}
\lim_{n\rightarrow\infty}\sqrt{n}\left|\E\left[R^{\ge 2, blue}_{i}\Big| W=w\right]-\frac{\mu}{r}-\frac{1}{2}\right|=0.
\end{equation}
Combining \eqref{Proof of theorems, moment computation, red i} and \eqref{Proof of theorems, moment computation, blue i}, we have
$$
\lim_{n\rightarrow\infty}\sqrt{n}\left|\E\left[R^{\ge 2}_i\Big|W=w\right]-\frac{\lambda}{r}\right|=0.
$$
Then \eqref{Proof of theorems, moment computation, R^>=2 mean} follows from summing over $i$.

Equation \eqref{Proof of theorems, moment computation, R^k mean} can be proved analogously. However, since we are only concerned about the leading term, applying the dominated convergence theorem makes the argument easier. We use the change of variable $t=T_n-(s+\log n+\log(1/w))/r$, and then
\begin{align*}
&\E\left[R^{k}_i\Big|W=w\right]\\ 
&\qquad=\E\left[\Pi_i\left(\mathcal{A}(\lambda q(n\delta_{T_n}/ w, \cdot), \left[\max_{i+1\le j\le i+k-1} H_j, H_{i}\wedge H_{i+k}\right]\right)\Big|W=w\right]\\
&\qquad\qquad+\P\left(\max_{i+1\le j\le i+k-1} H_j\le  H_{i+k}\le H_{i}\Big|W=w\right)\\
&\qquad= \int_0^{T_n} \P\left(\max_{i+1\le j\le i+k-1} H_j\le t\le H_{i}\wedge H_{i+k}\Big|W=w\right) \lambda q(n\delta_{T_n}/ w,t)\ dt+\frac{(k-1)!}{(k+1)!}\\
&\qquad=\frac{1}{r}\int_{-\log n-\log(1/w)}^{rT_n-\log n-\log(1/w)} \P\left(\min_{i+1\le j\le i+k-1} U_{j}\ge s\ge U_{i}\vee U_{i+k}\right)\\
&\qquad\qquad \lambda q(n\delta_{T_n}/ w,T_n-(s+\log n+\log(1/w))/r) \ ds+\frac{1}{k(k+1)}.
\end{align*}
As $n\rightarrow\infty$, we have ${-\log n-\log(1/w)}\rightarrow-\infty$, $rT_n-\log n-\log(1/w)\rightarrow\infty$, and $\lambda q(n\delta_{T_n}/ w, t(s))\newline\rightarrow\mu+r/(1+e^{-s})$. Note also that 
\begin{align*}
&\P\left(\min_{i+1\le j\le i+k-1} U_{j}\ge s\ge U_{i}\vee U_{i+k}\right) \lambda q(n\delta_{T_n}/ w, t(s))\le \lambda \P\left(\min_{i+1\le j\le i+k-1}U_{j}\ge s\ge U_{i}\vee U_{i+k}\right), 
\end{align*}
which is integrable over $\R$. Therefore, by the dominated convergence theorem and equation \eqref{Proof of theorems, moment computation, calculus fact}, we have
\begin{align}\label{Proof of theorems, moment computation, R^k_i mean}
&\lim_{n\rightarrow\infty} \E\left[R^{k}_i\Big|W=w\right]\nonumber\\ 
&\qquad=\frac{1}{r}\int_{-\infty}^{\infty} \P\left(\min_{i+1\le j\le i+k-1} U_{j}\ge s\ge U_{i}\vee U_{i+k}\right) \cdot \left(\mu+\frac{r}{1+e^{-s}}\right)\ ds+\frac{1}{k(k+1)}\nonumber\\
&\qquad=\frac{1}{r}\int_{-\infty}^{\infty} \frac{e^{2s}}{(1+e^s)^{k+1}} \cdot \left(\mu+\frac{r}{1+e^{-s}}\right)\ ds+\frac{1}{k(k+1)}\nonumber\\
&\qquad=\frac{\mu}{r}\int_{-\infty}^{\infty} \frac{e^{2s}}{(1+e^s)^{k+1}}\ ds+\int_{-\infty}^{\infty} \frac{e^{3s}}{(1+e^s)^{k+2}}\ ds +\frac{1}{k(k+1)}\nonumber\\
&\qquad=\frac{\lambda-r}{r}\left(-\frac{1}{k}+\frac{1}{k-1}\right)+\left(\frac{1}{k+1}-\frac{2}{k}+\frac{1}{k-1}\right)+\frac{1}{k(k+1)}\nonumber\\
&\qquad=\frac{\lambda}{rk(k-1)}.    
\end{align}

To prove equation \eqref{Proof of theorems, moment computation, R^>=2 variance}, note that $R^{\ge 2}_{i}$ and $R^{\ge 2}_{j}$ are conditionally independent given $W=w$ if $|i-j|\ge 2$. It follows that
\begin{equation}\label{Proof of theorems, moment computation, variance in terms of covariances}
\lim_{n\rightarrow\infty}n^{-1}\Var\left(\sum_{i=1}^{n-2}R^{\ge 2}_{i}\Big|W=w\right)=\lim_{n\rightarrow\infty}\left(\Var\left(R^{\ge 2}_i\Big|W=w\right)+2\Cov\left(R^{\ge 2}_{i}, R^{\ge 2}_{i+1}\Big|W=w\right)\right).   
\end{equation}
To compute $\Var(R^{\ge 2}_i)$, since $R^{\ge 2, red}_{i}$ is $\{0,1\}$-valued, we have
\begin{equation}\label{Proof of theorems, moment computation, red i, red i}
\E\left[\left(R^{\ge 2,red}_{i}\right)^2\Big|W=w\right] = \E\left[R^{\ge 2,red}_{i}\Big|W=w\right] = \frac{1}{2}.   
\end{equation}
By the definition, $R^{\ge 2,blue}_{i}$ is nonzero only if the $\{0,1\}$-valued random variable $R^{\ge 2, red}_{i}$ is nonzero. Then, it follows from equation \eqref{Proof of theorems, moment computation, blue i} that
\begin{equation}\label{Proof of theorems, moment computation, red i, blue i}
\lim_{n\rightarrow\infty}\E\left[R^{\ge 2, red}_{i} R^{\ge 2, blue}_{i}\Big|W=w\right]=\lim_{n\rightarrow\infty}\E\left[ R^{\ge 2, blue}_{i}\Big|W=w\right]=\frac{\mu}{r}+\frac{1}{2}.
\end{equation}
Recall that \textbf{Leb} denotes the Lebesgue measure on $\R^2$ and $t(s)=T_n-(s+\log n+\log (1/w))/r$. A computation similar to the computation for $\E[R^k_{i}|W=w]$ gives the following
\begin{align*}
&\E\left[\left(R^{\ge 2, blue}_i\right)^2\Big|W=w\right]\\ 
&\qquad=\E\left[\Pi_i\left(\mathcal{A}(\lambda q(n\delta_{T_n}/ w, \cdot), \left[H_{i+1}, H_{i}\right]\right)^2\Big|W=w\right]\\
&\qquad=\E\left[\textbf{Leb}\left(\mathcal{A}(\lambda q(n\delta_{T_n}/ w, \cdot), \left[H_{i+1}, H_{i}\right])\right)^2+\textbf{Leb}\left(\mathcal{A}(\lambda q(n\delta_{T_n}/ w, \cdot), \left[H_{i+1}, H_{i}\right])\right)\Big|W=w\right]\\
&\qquad=\int_0^{T_n} \P\left(H_{i+1}\le t_1\le H_{i},H_{i+1}\le t_2\le H_{i}\Big|W=w\right)\lambda q(n\delta_{T_n}/ w,t_1)\lambda q(n\delta_{T_n}/ w,t_2)\ dt_1\ dt_2\\
&\qquad\qquad+\E\left[R_i^{\ge 2, blue}|W=w\right]\\
&\qquad=\frac{1}{r^2}\int_{-\log n-\log(1/w)}^{rT_n-\log n-\log(1/w)} \P\left(U_{i+1}\ge s_1\ge U_{i},U_{i+1}\ge s_2\ge U_{i}\right)\nonumber\\
&\qquad\qquad\lambda q(n\delta_{T_n}/ w,t(s_1))\lambda q(n\delta_{T_n}/ w,t(s_2)) \ ds_1\ ds_2+\E\left[R_i^{\ge 2, blue}|W=w\right].
\end{align*}
We apply the dominated convergence theorem as $n\rightarrow\infty$, use equation \eqref{Proof of theorems, moment computation, blue i}, and evaluate the last integral using a computer to get
\begin{align}\label{Proof of theorems, moment computation, blue i, blue i}
&\lim_{n\rightarrow\infty}\E\left[\left(R^{\ge 2, blue}_i\right)^2\Big|W=w\right]\nonumber\\
&\qquad=\frac{1}{r^2}\int_{-\infty}^{\infty}\int_{-\infty}^{\infty} \P(U_i\ge s_1\ge U_{i+1}, U_i\ge s_2\ge U_{i+1}) \nonumber\\
&\qquad\qquad\qquad\qquad\left(\mu+\frac{r}{1+e^{-s_1}}\right)\left(\mu+\frac{r}{1+e^{-s_2}}\right)\ ds_1\ ds_2+\frac{\mu}{r}+\frac{1}{2}\nonumber\\ 
&\qquad=\frac{2}{r^2}\int_{-\infty}^{\infty}\int_{-\infty}^{s_1} \frac{1}{1+e^{s_1}}\frac{e^{s_2}}{1+e^{s_2}} \left(\mu+\frac{r}{1+e^{-s_1}}\right)\left(\mu+\frac{r}{1+e^{-s_2}}\right)\ ds_2\ ds_1+\frac{\mu}{r}+\frac{1}{2}\nonumber\\
&\qquad=\frac{2}{r^2}\left(\frac{\pi^2}{6}\mu^2+\frac{\pi^2}{6}\mu r+\frac{1}{2}r^2\right)+\frac{\mu}{r}+\frac{1}{2}.
\end{align}
Combining \eqref{Proof of theorems, moment computation, R^>=2 mean}, \eqref{Proof of theorems, moment computation, red i, red i}, \eqref{Proof of theorems, moment computation, red i, blue i} and \eqref{Proof of theorems, moment computation, blue i, blue i}, we have
\begin{align}\label{Proof of theorems, moment computation, i, i}
\lim_{n\rightarrow\infty} \Var\left(R^{\ge 2}_i|W=w\right) &= \frac{1}{2}+2\left(\frac{\mu}{r}+\frac{1}{2}\right)+\frac{2}{r^2}\left(\frac{\pi^2}{6}\mu^2+\frac{\pi^2}{6}\mu r+\frac{1}{2}r^2\right)+\frac{\mu}{r}+\frac{1}{2}-\left(\frac{\lambda}{r}\right)^2\nonumber\\
&=\left(\frac{\pi^2}{3}-1\right)\frac{\mu^2}{r^2}+\left(\frac{\pi^2}{3}+1\right)\frac{\mu}{r}+2. 
\end{align}
The computation for $\Cov(R^{\ge 2}_i, R^{\ge 2}_{i+1})$ is analogous. We have
\begin{equation}\label{Proof of theorems, moment computation, red i, red i+1}
\E\left[R^{\ge 2,red}_{i}R^{\ge 2,red}_{i+1}\Big|W=w\right]=\P(H_{i+2}\le H_{i+1}\le H_{i}|W=w)=\frac{1}{6},   
\end{equation}
\begin{align}\label{Proof of theorems, moment computation, red i, blue i+1}
\lim_{n\rightarrow\infty} \E\left[R^{\ge 2,red}_i R^{\ge 2,blue}_{i+1}\Big|W=w\right] 
&= \frac{1}{r}\int_{-\infty}^{\infty} \P(U_i\le U_{i+1}\le s\le U_{i+2}) \cdot \left(\mu+\frac{r}{1+e^{-s}}\right)\ ds\nonumber\\
&= \frac{1}{r}\int_{-\infty}^{\infty} \frac{e^{2s}}{2(1+e^s)^3} \cdot \left(\mu+\frac{r}{1+e^{-s}}\right)\ ds\nonumber\\
&=\frac{\mu}{4r}+\frac{1}{6},   
\end{align}
\begin{align}\label{Proof of theorems, moment computation, blue i, red i+1}
\lim_{n\rightarrow\infty} \E\left[R^{\ge 2,blue}_i R^{\ge 2,red}_{i+1}\Big|W=w\right] 
&= \frac{1}{r}\int_{-\infty}^{\infty} \P(U_i\le s\le U_{i+1}\le  U_{i+2}) \cdot \left(\mu+\frac{r}{1+e^{-s}}\right)\ ds\nonumber\\
&= \frac{1}{r}\int_{-\infty}^{\infty} \frac{e^{s}}{2(1+e^s)^3} \cdot \left(\mu+\frac{r}{1+e^{-s}}\right)\ ds\nonumber\\
&=\frac{\mu}{4r}+\frac{1}{12}, 
\end{align}
and evaluating the last integral using a computer,
\begin{align}\label{Proof of theorems, moment computation, blue i, blue i+1}
&\lim_{n\rightarrow\infty}\E\left[R^{\ge 2, blue}_i R^{\ge 2,blue}_{i+1}\Big| W=w\right]\nonumber\\
&=\frac{1}{r^2}\int_{-\infty}^{\infty}\int_{-\infty}^{\infty} \P(U_i\le s_1\le U_{i+1}\le s_2\le U_{i+2}) \cdot \left(\mu+\frac{r}{1+e^{-s_1}}\right)\cdot \left(\mu+\frac{r}{1+e^{-s_2}}\right)\ ds_1\ ds_2\nonumber\\
&=\frac{1}{r^2}\int_{-\infty}^{\infty}\int_{-\infty}^{s_1} \frac{1}{1+e^{s_1}}\left(\frac{e^{s_1}}{1+e^{s_1}}-\frac{e^{s_2}}{1+e^{s_2}}\right)\frac{e^{s_2}}{1+e^{s_2}} \cdot \left(\mu+\frac{r}{1+e^{-s_1}}\right)\cdot \left(\mu+\frac{r}{1+e^{-s_2}}\right)\ ds_2\ ds_1\nonumber\\
&=\frac{1}{r^2}\left(\left(2-\frac{\pi^2}{6}\right)\mu^2+\left(2-\frac{\pi^2}{6}\right)\mu r+\frac{1}{12}r^2\right).
\end{align} 
Combining \eqref{Proof of theorems, moment computation, red i}, \eqref{Proof of theorems, moment computation, blue i}, \eqref{Proof of theorems, moment computation, red i, red i+1}, \eqref{Proof of theorems, moment computation, red i, blue i+1}, \eqref{Proof of theorems, moment computation, blue i, red i+1} and \eqref{Proof of theorems, moment computation, blue i, blue i+1}, we get
\begin{equation}\label{Proof of theorems, moment computation, i, i+1}
\lim_{n\rightarrow\infty} \Cov(R^{\ge 2}_{i}, R^{\ge 2}_{i+1}|W=w)=\left(1-\frac{\pi^2}{6}\right)\frac{\mu^2}{r^2}+\left(\frac{1}{2}-\frac{\pi^2}{6}\right)\frac{\mu}{r}-\frac{1}{2}.
\end{equation}
Combining \eqref{Proof of theorems, moment computation, variance in terms of covariances}, \eqref{Proof of theorems, moment computation, i, i} and \eqref{Proof of theorems, moment computation, i, i+1}, we have
$$
\lim_{n\rightarrow\infty}n^{-1}\Var\left(\sum_{i=1}^{n-2}R^{\ge 2}_{i}\Big|W=w\right)=\frac{\mu^2}{r^2}+\frac{2\mu}{r}+1=\frac{\lambda^2}{r^2}.
$$
\end{proof}

\subsection{Proof of Theorem \ref{Theorem: LLN}}\label{Subsection: proof of theorem LLN}
By Corollary \ref{Corollary: error bounds}, it suffices to prove Proposition \ref{Proposition: lln}. Conditional on $W=w$, the random sequences $(R^k_i)_{i\le j}$ and $(R^k_{i})_{i\ge j+k+1}$ are independent for all $j$. We can, therefore, apply the law of large numbers to $(R^k_{i+j(k+1)})_{j\ge 1}$ for $i=1,2,\dots,k,k+1$. We have
$$
\frac{\sum_{j=1} ^{\lfloor(n-i)/(k+1)\rfloor} R_{i+j(k+1)}}{n} \stackrel{P}{\rightarrow}\frac{\lambda}{r k(k-1)(k+1)}.
$$
Then Proposition \ref{Proposition: lln} follows from \eqref{Proof of theorems, moment computation, R^k mean} and recombining the results for $i=1,2,\dots,k,k+1$.

\subsection{Proof of Theorem \ref{Theorem} and Corollary \ref{Corollary: number of mutations}} \label{Subsection: proof of theorem}
\begin{proof}[Proof of Theorem \ref{Theorem}]
By Corollary \ref{Corollary: error bounds}, it suffices to prove Proposition \ref{Proposition: clt}. Recall from the discussion after Proposition \ref{Proposition: clt} that it suffices to prove the central limit theorem conditional on $W$. Given $W=w$, the random sequences $(R^{\ge 2}_{i})_{i\le k}$ and $(R^{\ge 2}_{i})_{i\ge k+2}$ are independent for all $k$. We apply the central limit theorem for $m$-dependent sequences to this sequence. By Theorem 3 of \cite{diananda1955central}, it suffices to check the following conditions:
\begin{equation}\label{Proof of theorems, proof of clt, condition 1}
\liminf_{n\rightarrow\infty} \frac{1}{n}\Var\left(\sum_{i=1}^{n-2}R^{\ge 2}_i\Big|W=w\right)>0, 
\end{equation}
\begin{equation}\label{Proof of theorems, proof of clt, condition 2} 
\lim_{n\rightarrow\infty}\frac{1}{n} \sum_{i=1}^{n-2} \E\left[\left|R^{\ge 2}_{i}-\E\left[R^{\ge 2}_{i}\right]\right|^2\mathbbm{1}_{\left\{\left|R^{\ge 2}_{i}-\E\left[R^{\ge 2}_{i}\right]\right|>\epsilon\sqrt{n}\right\}}\Big|W=w\right]=0,\quad \forall \epsilon>0.  
\end{equation}
If \eqref{Proof of theorems, proof of clt, condition 1} and \eqref{Proof of theorems, proof of clt, condition 2} hold, then Proposition \ref{Proposition: clt} follows from \eqref{Proof of theorems, moment computation, R^>=2 variance} and \eqref{Proof of theorems, moment computation, R^>=2 mean}. Equation \eqref{Proof of theorems, proof of clt, condition 1} follows from Lemma \ref{Lemma: moment computation}. For equation \eqref{Proof of theorems, proof of clt, condition 2}, it is sufficient to prove the following bound; see p.362 of \cite{billingsley1995probability};
% we have
% \begin{align*}
% &\frac{1}{n}\sum_{i=1}^{n-2}\E\left[\left|R^{\ge 2}_{i}-\E\left[R^{\ge 2}_{i}\right]\right|^2\mathbbm{1}_{\left\{\left|R^{\ge 2}_{i}-\E\left[R^{\ge 2}_{i}\right]\right|>\epsilon\sqrt{n}\right\}}\Big|W=w\right]\\
% &\qquad=\frac{n-2}{n}\E\left[\left|R^{\ge 2}_{i}-\E\left[R^{\ge 2}_{i}\right]\right|^2\mathbbm{1}_{\left\{\left|R^{\ge 2}_{i}-\E\left[R^{\ge 2}_{i}\right]\right|>\epsilon\sqrt{n}\right\}}\Big|W=w\right].
% \end{align*}
$$
\sup_{n}\E\left[\left(R_{i}^{\ge 2}\right)^p\Big|W=w\right]<\infty, \qquad\text{ for all } p\in(2,\infty). 
$$
Recall that 
\begin{align*}
R^{\ge 2}_{i}&=R^{\ge 2,blue}_{i}+R^{\ge 2,red}_{i}\\
&=\Pi_i(\mathcal{A}(\lambda q(n\delta_T/ W,\cdot),[H_{i+1},H_{i}]))+\mathbbm{1}_{\{H_{i+1}\le H_{i}\}}\\
&\le \Pi_i(\mathcal{A}(\lambda,[H_{i+1},H_{i}]))+1\\
&=\Pi_i(\mathcal{A}(\lambda,[T-(\log n+\log(1/W)+U_i)/r,T-(\log n+\log(1/W)+U_{i+1})/r]))+1.
\end{align*}
Conditional on $W=w$, the right hand side is distributed as $\Pi_i(\mathcal{A}(\lambda,[-U_i/r,-U_{i+1}/r]))+1$, a random variable that does not depend on $n$ and has finite $p$th momemnt for all $p\in(2,+\infty)$.
% We use the conditional dominated convergence theorem to show this quantity goes to 0 as $n$ goes to infinity. For pointwise convergence, conditional on $W=w$ and $H_i=h_i$, $H_{i+1}=h_{i+1}$, the number of blue mutations on the $i$th branch $R^{\ge 2,blue}_{i}$ has a Poisson distribution with mean bounded from above by $\lambda (h_{i}-h_{i+1})^{+}$, and the number of red mutations on the $i$th branch $R^{\ge 2,red}_i$ is bounded by 1. It follows that
% $$
% \lim_{n\rightarrow\infty}\E\left[\left|R^{\ge 2}_{i}-\E\left[R^{\ge 2}_{i}\right]\right|^2\mathbbm{1}_{\left\{\left|R^{\ge 2}_{i}-\E\left[R^{\ge 2}_{i}\right]\right|>\epsilon\sqrt{n}\right\}}\Big|W=w, H_{i}=h_i, H_{i+1}=h_{i+1}\right]=0.
% $$
% Note also that
% \begin{align*}
% &\E\left[\left|R^{\ge 2}_{i}-\E\left[R^{\ge 2}_{i}\right]\right|^2\mathbbm{1}_{\left\{\left|R^{\ge 2}_{i}-\E\left[R^{\ge 2}_{i}\right]\right|>\epsilon\sqrt{n}\right\}}\Big|W=w, H_{i}, H_{i+1}\right]\\
% &\qquad\le \E[(R^{\ge 2}_i)^2|W=w, H_{i}, H_{i+1}]\le (\lambda(H_{i}-H_{i+1})^{+})^2+3\lambda (H_i-H_{i+1})^{+}+1,
% \end{align*}
% which is integrable. It follows that equation \eqref{Proof of theorems, proof of clt, condition 2} holds.
\end{proof}

\begin{proof}[Sketch of proof of Corollary \ref{Corollary: number of mutations}]
The proof of this corollary is similar to the proof of Proposition 11 in the supplementary information in \cite{johnson2023clonerate}. Following the notation therein, we decompose the fluctuation in $M^{\ge 2}_{n,T_n}$ into the fluctuation in the number of reproduction events $R^{\ge 2}_{n,T_n}$ and the fluctuation of $M^{\ge 2}_{n,T_n}$ given $R^{\ge 2}_{n,T_n}$. Let
$$
A_n =\frac{M^{\ge 2}_{n,T_n}-\nu R^{\ge 2}_{n,T_n}}{\sqrt{n\lambda \nu /r}},\qquad C_n =\frac{1}{\sqrt{n}}\left(R_{n,T_n}^{\ge 2}-\frac{n\lambda}{r}\right),
$$
so that
$$
M^{\ge 2}_{n,T_n}=\frac{n\lambda\nu}{r}+\sqrt{\frac{n\lambda \nu }{r}}A_n+\sqrt{n}\nu C_n.
$$
Then $(A_n, C_n)\Rightarrow(Z_1,Z_2)$ where $Z_1,Z_2$ are independent normal random variables with mean $0$ and variances $1$ and $\lambda^2/r^2$. The $\sqrt{n\lambda \nu /r} A_n$ term contributes the $\lambda\nu/r$ term of the variance in Corollary \ref{Corollary: number of mutations} and the $\sqrt{n}\nu C_n$ term contributes the other part of the variance. 
\end{proof}

\noindent {\bf {\Large Acknowledgments}}

\bigskip
\noindent The author thanks Professor Jason Schweinsberg for bringing up this research topic, carefully reading the draft, and providing helpful advice in the development of this paper.\

\bibliographystyle{plain}
% \normalem
\bibliography{Ref}
\end{document}